%
%
%
%
%

\documentclass[smallextended,numbook]{svjour3}     
\smartqed  
\usepackage{graphicx}
\usepackage{amsfonts}
\usepackage{amssymb}
\usepackage{amsmath}
\usepackage{amsxtra}
\usepackage{latexsym}
%
%
%

%
\oddsidemargin 20mm
\evensidemargin 20mm
\topmargin  20 pt

%
\textheight=7.8 true in


%
\begin{document}

\title{\bf \Large On the order optimality of the regularization via inexact Newton iterations
}

\titlerunning{Short form of title}        

\author{Qinian Jin        
}

\authorrunning{Short form of author list} 

\institute{Qinian Jin \at
Mathematical Sciences Institute,
The Australian National University,\\
Canberra, ACT 0200, Australia\\
\email{Qinian.Jin@anu.edu.au}
}


\newtheorem{Assumption}{Assumption}[section]

\def\d{\delta}
\def\ds{\displaystyle}
\def\e{{\epsilon}}
\def\eb{\bar{\eta}}
\def\enorm#1{\|#1\|_2}
\def\Fp{F^\prime}
\def\fishpack{{FISHPACK}}
\def\fortran{{FORTRAN}}
\def\gmres{{GMRES}}
\def\gmresm{{\rm GMRES($m$)}}
\def\Kc{{\cal K}}
\def\norm#1{\|#1\|}
\def\wb{{\bar w}}
\def\zb{{\bar z}}

\def\N{\mathcal N}
\def\R{\mathcal R}
\def\D{\mathcal D}
\def\X{\mathcal X}
\def\Y{\mathcal Y}
\def\B{\mathcal B}
\def\A{\mathcal A}
\def\a{\alpha}
\def\b{\beta}
\def\d{\delta}
\def\la{\lambda}

\maketitle

\begin{abstract}
Inexact Newton regularization methods have been proposed by Hanke and Rieder
for solving nonlinear ill-posed inverse problems. Every such a method consists of two components:
an outer Newton iteration and an inner scheme providing increments by regularizing local
linearized equations.  The method is terminated by a discrepancy principle. In this paper we
consider the inexact Newton regularization methods with the inner scheme defined by Landweber
iteration, the implicit iteration, the asymptotic regularization and Tikhonov regularization.
Under certain conditions we obtain the order optimal convergence rate result which
improves the suboptimal one of Rieder. We in fact obtain a more general order optimality
result by considering these inexact Newton methods in Hilbert scales.


\subclass{65J15 \and 65J20 \and 47H17}
\end{abstract}

\def\theequation{\thesection.\arabic{equation}}
\catcode`@=12

\section{\bf Introduction}
\setcounter{equation}{0}

Inverse problems arise whenever one searches for unknown causes based on observation of
their effects. Driven by the requirements from huge amount of practical applications,
the field of inverse problems has undergone a tremendous growth. Such problems are
usually ill-posed in the sense that their solutions  do not depend continuously on the data.
In practical applications, one never has exact data,
instead only noisy data are available due to errors in the measurements. Even if the deviation is
very small, algorithms developed for well-posed problems may fail, since noise could be amplified
by an arbitrarily large factor. Therefore, the development of stable methods for solving inverse
problems is a central topic.


In this paper we consider the stable resolution of nonlinear inverse problems
which mathematically can be formulated as the nonlinear equations
\begin{equation}\label{1}
F(x)=y,
\end{equation}
where $F: \D(F)\subset \X\mapsto \Y$ is a nonlinear Fr\'{e}chet differentiable operator
between two Hilbert spaces $\X$ and $\Y$ whose norms and inner
products are denoted as $\|\cdot\|$ and $(\cdot, \cdot)$
respectively. We use $F'(x)$ to denote the Fr\'{e}chet derivative of $F$ at $x\in \D(F)$
and use $F'(x)^*$ to denote the adjoint of $F'(x)$. We assume that (\ref{1}) has a solution $x^\dag$
in the domain $\D(F)$ of $F$, i.e. $F(x^\dag)=y$. Let $y^\d$ be the only available noisy data
of $y$ satisfying
\begin{equation}\label{1.2}
\|y^\delta-y\|\le \delta
\end{equation}
with a given small noise level $\delta> 0$. Due to the intrinsic ill-posedness, regularization
methods should be employed to produce from $y^\d$ a stable approximate solution of (\ref{1}).

Many regularization methods have been considered in the last two
decades. Due to their straightforward implementation and fast convergence property,
Newton type regularization methods are attractive for solving nonlinear inverse problems.
In \cite{Jin2011} we considered a general class of Newton type methods of the form
\begin{equation}\label{m1}
x_{n+1} =x_n+g_{t_n} \left(F'(x_n)^*F'(x_n)\right) F'(x_n)^*
\left(y^\d-F(x_n)\right),
\end{equation}
where $x_0$ is an initial guess of $x^\dag$, $\{t_n\}$ is a sequence of
positive numbers, and $\{g_t\}$ is a family of spectral filter functions.
The scheme (\ref{m1}) can be derived by applying the linear regularization
method defined by $\{g_t\}$ to the linearized equation
$$
F'(x_n)(x-x_n)=y^\delta-F(x_n)
$$
which follows from (\ref{1}) by replacing $y$ by $y^\d$ and $F(x)$ by
its linearization $F(x_n)+F'(x_n)(x-x_n)$ at $x_n$.
When the sequence $\{t_n\}$ is given a priori with suitable property, we showed in \cite{Jin2011}
that, under the discrepancy principle, the methods are convergent and order optimal.
We also considered in \cite{JT2011} the methods in Hilbert scales and obtained the order optimal convergence rates.

In the definition of the Newton type methods (\ref{m1}),
one may determine the sequence $\{t_n\}$ adaptively
during computation. Motivated by the inexact Newton methods
in \cite{DES82} for well-posed problems, Hanke proposed in \cite{H97} his regularizing
Levenberg-Marquardt scheme for solving nonlinear inverse problems with $\{t_n\}$ chosen to satisfy
$$
\|y^\d-F(x_n)-F'(x_n)(x_{n+1}-x_n)\|= \eta \|y^\d-F(x_n)\|
$$
at each step for some preassigned number $\eta\in (0,1)$ and with the discrepancy principle
used to terminate the iteration. Rieder generalized the idea in \cite{H97} and
proposed in \cite{R99} (see also \cite{LR2010}) a general
class of inexact Newton methods; every such a method consists of two components: an outer Newton
iteration and an inner scheme providing increment by regularizing
local linearized equations. When the inner scheme is defined by an iterative method, the number of iterations
is determined adaptively which has the advantage to avoid the over-solving of the
linearized equation that may occur when the inner scheme is terminated a priori.
The convergence rates of inexact Newton regularization methods were considered in \cite{R01} but only suboptimal
ones were derived. It is a longstanding question whether the inexact Newton methods are order optimal.
Important progress has been made recently in \cite{H2010} where the regularizing Levenberg-Marquardt
scheme is shown to be order optimal. In this paper we consider the inexact Newton regularization methods
in which the inner schemes are defined by applying various linear regularization methods, including Landweber iteration, the
implicit iteration, the asymptotic regularization and Tikhonov regularization, to the local linearized
equations and show that these methods are indeed order optimal by exploiting
ideas developed in \cite{H2010,JT2011,LR2010}.
We even consider these methods in Hilbert scales and derive the order optimal convergence rates.
Our theoretical results confirm those numerical illustrations in \cite{R99,R01}.

This paper is organized as follows. In Section 2 we formulate the methods precisely
and state the main results on the order optimal convergence rates. In Section 3 we show that these methods
are well-defined, and prove that the error decays monotonically. In Section 4 we complete the
proof of the the main result by deriving the order optimal convergence rates.

\section{\bf Main results}
\setcounter{equation}{0}

The inexact Newton regularization methods are a family of methods for
solving nonlinear ill-posed inverse problems. Every such a method consists of two components,
an outer Newton iteration and an inner scheme providing increments by regularizing
local linearized equations. An approximate solution is output by a discrepancy principle.

To be more precise, the method starts with an initial guess $x_0\in \D(F)$.
Assume that $x_n$ is a current iterate, one may apply any regularization scheme to
the linearized equation
\begin{equation}\label{linear}
F'(x_n) u=y^\d-F(x_n)
\end{equation}
to produce a family of regularized approximations $\{u_n(t)\}$.
One may choose $t_n$ to be the smallest number $t_n>0$ such that
\begin{equation} \label{4.2}
\|y^\d-F(x_n)-F'(x_n) u_n(t_n)\|\le \eta \|y^\d-F(x_n)\|
\end{equation}
for some preassigned value $0<\eta<1$. The next iterate is then updated as
$x_{n+1}=x_n +u_n(t_n)$. The outer Newton iteration is terminated by the discrepancy
principle
\begin{equation}\label{4.3}
\|y^\d-F(x_{n_\d})\|\le \tau\d <\|y^\d-F(x_n)\|, \qquad 0\le n<n_\d
\end{equation}
for some given number $\tau>1$. This outputs an integer $n_\d$ and hence $x_{n_\d}$ which is
used to approximate the exact solution $x^\dag$.

The convergence rates of the inexact Newton regularization methods have been considered in \cite{R99,R01}.
It has been shown that if
$$
x_0-x^\dag\in \R((F'(x^\dag)^* F'(x^\dag))^\mu)
$$
for some $0<\mu\le 1/2$,  then there is a number $0<\mu_0<\mu$ such that
$$
\|x_{n_\d}-x^\dag\|=O(\d^{2(\mu-\mu_0)/(1+2\mu)})
$$
which is only suboptimal. It is a long-standing question whether the inexact Newton
regularization methods are order optimal.  Important progress has been made recently in
\cite{H2010} where the regularizing Levenberg-Marquardt scheme is proved to be order optimal.

In this paper we will consider the inexact Newton regularization methods in which the inner
schemes are defined by applying Landweber iteration, the implicit iteration, the asymptotic
regularization, or  Tikhonov regularization to the linearized equation (\ref{linear})
and show that these methods are indeed order optimal. For these four methods, $u_n(t)$ are defined by
$$
u_n(t)=g_t\left(F'(x_n)^*F'(x_n)\right) F'(x_n)^* \left(y^\d-F(x_n)\right)
$$
with the spectral filter functions $\{g_t\}$ given by
\begin{equation}\label{5.7.2}
g_t(\la)=\sum_{j=0}^{[t]-1} (1-\lambda)^j, \,\,\,\, \sum_{j=1}^{[t]} (1+\lambda)^{-j},
\,\,\,\, \frac{1}{\la} \left(1-e^{-t\la}\right), \,\,\,\, \left(\frac{1}{t}+\la\right)^{-1}
\end{equation}
respectively, where $[t]$ denotes the largest integer not greater than $t$.

We need the following standard condition which is known as the Newton-Mysovskii condition (see \cite{DES98}).

\begin{Assumption}\label{A0}
(a) There exists $K_0\ge 0$ such that
\begin{equation*}
\|[F'(x)-F'(z)] h\|\le K_0\|x-z\| \|F'(z) h\|, \quad \forall h\in \X
\end{equation*}
for all $x, z \in B_\rho(x^\dag)\subset \D(F)$, where $B_\rho(x^\dag)$ denotes the ball of radius
$\rho>0$ with center at $x^\dag$.

(b) $F$ is properly scaled so that $\|F'(x) \|
\le \Theta<1$ for all $x\in B_\rho(x^\dag)$.
\end{Assumption}

The order optimality of these four inexact Newton regularization methods is contained in the following result.

\begin{theorem}\label{T1}
Let $F$ satisfy Assumption \ref{A0}, let $\tau>2$ and $0<\eta<1$
be such that $ \tau\eta>2$, and let $x_0\in B_\rho(x^\dag)$.
If $K_0\|x_0-x^\dag\|$ is sufficiently small, then the inexact Newton regularization methods
with the inner scheme defined by Landweber iteration, the implicit iteration, the asymptotic regularization,
or Tikhonov regularization are well-defined and terminate after $n_\d=O(1+|\log \d|)$ iterations.
If, in addition, $x_0-x^\dag =(F'(x^\dag)^*F'(x^\dag))^\mu \omega$ for some
$\omega\in \mathcal{N}(F'(x^\dag))^\perp\subset \X$ and $0<\mu\le 1/2$
and if $K_0\|\omega\|$ is sufficiently small, then there holds
$$
\|x_{n_\d}-x^\dag\| \le C \|\omega\|^{\frac{1}{1+2\mu}} \d^{\frac{2\mu}{1+2\mu}}
$$
for some constant $C$ independent of $\d$ and $\|\omega\|$.
\end{theorem}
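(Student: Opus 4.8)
The plan is to track the error $e_n:=x_n-x^\dag$ and the outer residual $v_n:=y^\d-F(x_n)$, and to reduce everything to the residual filters $r_t(\la):=1-\la g_t(\la)$ associated with the four inner schemes in (\ref{5.7.2}). First I would record the uniform spectral properties shared by these filters: on the spectral interval $[0,\Theta^2]\subset[0,1]$ one has $0\le r_t(\la)\le 1$, the qualification bound $\sup_\la \la^\nu r_t(\la)\le c_\nu t^{-\nu}$ for $0\le\nu\le 1/2+\mu$, and $\sup_\la \la^s g_t(\la)\le c\,t^{1-s}$ for $0\le s\le1$, together with monotonicity in $t$. Writing $A_n:=F'(x_n)$ and using Assumption \ref{A0}(a) in its integrated form
$$
\|F(x^\dag)-F(x_n)-A_n(x^\dag-x_n)\|\le \tfrac12 K_0\|e_n\|\,\|A_n e_n\|,
$$
I would derive the error recursion
$$
e_{n+1}=r_{t_n}(A_n^*A_n)\,e_n+g_{t_n}(A_n^*A_n)A_n^*\big[(y^\d-y)+R_n\big],\qquad \|R_n\|\le \tfrac12 K_0\|e_n\|\,\|A_n e_n\|,
$$
which is the backbone of all subsequent estimates.

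Second, I would establish well-definedness, monotonicity and the iteration count. Since the inner residual in (\ref{4.2}) equals $r_t(A_nA_n^*)v_n$ and decreases monotonically in $t$ to its projection onto $\N(A_n^*)$, and since that limit is bounded by $\d+\|R_n\|$, the hypotheses $\tau>2$, $\tau\eta>2$ together with the smallness of $K_0\|e_n\|$ force this limit below $\eta\|v_n\|$ before termination; hence a smallest admissible $t_n$ exists. A standard induction, using $0\le r_{t_n}\le1$ and the smallness of $K_0\|x_0-x^\dag\|$ to absorb the noise and nonlinearity terms, shows $\|e_{n+1}\|\le\|e_n\|$, so all iterates remain in $B_\rho(x^\dag)$. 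Combining (\ref{4.2}) with the nonlinearity estimate yields geometric decay $\|v_{n+1}\|\le q\|v_n\|$ with some $q<1$; comparing with the lower bound $\|v_n\|>\tau\d$ valid for $n<n_\d$ then gives $n_\d=O(1+|\log\d|)$.

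Third comes the order optimality, which I would obtain from an interpolation (moment) inequality. Put $B:=A_*^*A_*$ with $A_*:=F'(x^\dag)$. From the discrepancy principle $\|v_{n_\d}\|\le\tau\d$, together with the comparison of $A_{n_\d}$ and $A_*$ and the linearization estimate, one gets $\|B^{1/2}e_{n_\d}\|=\|A_* e_{n_\d}\|\le c\,\d$. If in addition one can establish the source-type bound $\|B^{-\mu}e_{n_\d}\|\le C\|\omega\|$, then the inequality
$$
\|e_{n_\d}\|\le \|B^{-\mu}e_{n_\d}\|^{\frac{1}{1+2\mu}}\,\|B^{1/2}e_{n_\d}\|^{\frac{2\mu}{1+2\mu}},
$$
valid on $\N(A_*)^\perp$ where $\omega$ lives, yields exactly $\|e_{n_\d}\|\le C\|\omega\|^{1/(1+2\mu)}\d^{2\mu/(1+2\mu)}$.

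The crux, and the step I expect to be the main obstacle, is the propagation of the source condition, i.e.\ the uniform bound $\|B^{-\mu}e_n\|\le C\|\omega\|$ for all $n\le n_\d$. The difficulty is that the filters in the recursion act on $A_n^*A_n$ whereas the source condition is phrased through $B=A_*^*A_*$, and these operators do not commute. I would handle this by the factorization $A_n=(I+Q_n)A_*$, where Assumption \ref{A0}(a) gives $\|Q_n\|\le K_0\|e_n\|$, combined with perturbation estimates for fractional powers available for $0\le\mu\le1/2$, which let me commute $B^{-\mu}$ past $r_{t_n}(A_n^*A_n)$ at the cost of commutator terms of size $O(K_0\|e_n\|)$. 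Applying $B^{-\mu}$ to the recursion, the main term $r_{t_n}(A_n^*A_n)B^{-\mu}e_n$ reproduces $\|B^{-\mu}e_n\|$ with factor at most $1$, while the noise and nonlinearity contributions must be controlled through the qualification bound, the growth of $g_{t_n}$ in $t_n$, and the geometric decay of $\|v_n\|$. Since the main term carries no contraction, the genuinely delicate point is that the accumulated perturbations stay summable uniformly in $\d$; here the smallness of $K_0\|\omega\|$ is what keeps the induction constant finite and independent of $\d$ and $\|\omega\|$, closing the argument.
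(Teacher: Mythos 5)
Your skeleton --- error recursion, monotonicity, logarithmic bound on $n_\d$, and the moment inequality between $\|B^{-\mu}e_{n_\d}\|$ and $\|B^{1/2}e_{n_\d}\|$ with the propagation of the source bound $\|B^{-\mu}e_n\|\lesssim\|\omega\|$ as the crux --- is the same architecture the paper uses (the paper proves Theorem \ref{T1} as a corollary of the Hilbert-scale Theorem \ref{T2} via $L=(F'(x^\dag)^*F'(x^\dag))^{-1/2}$, but that is presentational). The gap is in your resolution of the crux. You propose to commute $B^{-\mu}$ past $r_{t_n}(A_n^*A_n)$ one outer step at a time, ``at the cost of commutator terms of size $O(K_0\|e_n\|)$.'' Since $B^{-\mu}$ is unbounded, no such operator-norm commutator bound is available. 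What actually holds (Lemma \ref{L10}, imported from \cite{JT2011}) is a bound on $(A^*A)^{\nu}\prod_k r_{t_k}(A^*A)\bigl[g_{t_j}(A^*A)A^*-g_{t_j}(A_j^*A_j)A_j^*\bigr]$ of size $t_j(s_n-s_j)^{-\nu-1/2}K_0\|e_j\|$ (in the normalization of Theorem \ref{T1}), valid only for $\nu\ge -1/2$ --- and this admissible range is exactly where the hypothesis $\mu\le 1/2$ enters, since the induction applies the estimate with $\nu=-\mu$. Your argument produces no mechanism forcing that restriction, which signals that the perturbation estimate you are assuming does not exist in the form you need. To use the correct estimate one must first telescope the recursion into the closed forms (\ref{20})--(\ref{21}), in which the propagator is a product of residual filters at the \emph{fixed} operator $A$ and the perturbations appear as the explicit differences above, rather than commuting step by step.

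Second, and more seriously, your control of the accumulated perturbations rests on ``the geometric decay of $\|v_n\|$'' plus smallness of $K_0\|\omega\|$. Geometric decay in the outer index $n$ is the wrong currency: after telescoping, the perturbation terms carry weights of the form $t_j(s_l-s_j)^{-p}$ in the cumulative inner parameter $s_n=\sum_{j<n}t_j$, and since the $t_j$ are chosen adaptively, decay in $n$ gives no control in $s_n$. The paper closes the induction only by propagating a \emph{second, coupled} hypothesis, $\|Te_n\|\lesssim\|\omega\|(1+s_n)^{-(1/2+\mu)}$ (equivalently $\|y^\d-F(x_j)\|\lesssim\|\omega\| s_{j+1}^{-(1/2+\mu)}$ via Lemma \ref{L2.3}), and then invoking the summation Lemma \ref{L2} to bound $\sum_j t_j(s_l-s_j)^{-p}s_{j+1}^{-q}$ by the right power of $s_l$; the summability $\sum_n t_n\|y^\d-F(x_n)\|^2\lesssim\|e_0\|^2$ obtained as a by-product of the monotonicity proof is also used there. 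Without this algebraic decay in $s_n$, the sum of $O(K_0\|e_n\|)$-sized perturbations over $n_\d=O(1+|\log\d|)$ outer steps is not uniformly bounded no matter how small $K_0\|\omega\|$ is: smallness shrinks each term but does not make the accumulation converge. So the ``genuinely delicate point'' you flag at the end is precisely where the argument, as proposed, does not close; the missing ideas are the telescoped representation with the fixed-operator perturbation lemma and the simultaneous induction on $\|Te_n\|$ with the $s_n$-algebraic rate.
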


We will not give the proof of Theorem \ref{T1} directly. Instead, we will prove a more
general result by considering  these four inexact Newton regularization methods in Hilbert scales.
Let $L$ be a densely defined self-adjoint strictly positive linear
operator in $\X$ satisfying
\begin{equation*}
\|x\|^2\le \gamma (L x, x), \quad x\in \D(L)
\end{equation*}
for some constant $\gamma>0$, where $\D(L)$ denotes the domain of $L$. For each $t\in {\mathbb R}$,
we define $\X_t$ to be the completion of $\cap_{k=0}^\infty \D(L^k)$
with respect to the Hilbert space norm
$$
\|x\|_t:= \|L^t x\|.
$$
This family of Hilbert spaces $\{\X_t\}_{t\in {\mathbb R}}$ is called the
Hilbert scales generated by $L$. The following are fundamental properties (see \cite{EHN96}):

(a) For any $-\infty<q<r<\infty$, $\X_r$ is densely and continuously embedded
into $\X_q$ with
\begin{equation}\label{embed}
\|x\|_q\le \gamma^{r-q}\|x\|_r, \quad x\in \X_r,
\end{equation}

(b) For any $-\infty<p<q<r<\infty$ there holds the interpolation inequality
\begin{equation}\label{inter}
\|x\|_q\le \|x\|_p^{\frac{r-q}{r-p}}\|x\|_r^{\frac{q-p}{r-p}},  \quad x\in \X_r.
\end{equation}

(c) If $T:\X\mapsto \Y$ is a bounded linear operator satisfying
$$
m\|h\|_{-a}\le \|Th\|\le M\|h\|_{-a}, \quad h\in \X
$$
for some constants $M\ge m>0$ and  $a\ge 0$, then for the operator
$A:=TL^{-s}:\X\mapsto \Y$ with $s\ge -a$ there holds for any $|\nu|\le 1$ that
\begin{equation}\label{2.3}
 \underline{c}(\nu) \|h\|_{-\nu(a+s)}\le\|(A^*A)^{\nu/2}h\| \le
\overline{c}(\nu) \|h\|_{-\nu(a+s)}
\end{equation}
on $\D((A^*A)^{\nu/2})$, where $A^*:=L^{-s} T^*: \Y\to \X$ is the adjoint of $A$ and
$$
\underline{c}(\nu):=\min\{m^\nu,M^\nu\} \quad \mbox{and} \quad
\overline{c}(\nu)=\max\{m^\nu, M^\nu\}.
$$

We will consider the inexact Newton regularization methods in which the inner schemes
are defined by applying Landweber iteration, the implicit iteration, the asymptotic regularization,
or Tikhonov regularization in Hilbert scales to the linearized equation (\ref{linear}). Now we have
\begin{equation}\label{5.7.1}
u_n(t)= g_{t}\left(L^{-2s}F'(x_n)^* F'(x_n)\right) L^{-2s} F'(x_n)^*\left(y^\d-F(x_n)\right)
\end{equation}
with $g_t$ defined by (\ref{5.7.2}), where $s\in {\mathbb R}$ is a suitable chosen number. The iterative solutions are defined by
$x_{n+1}=x_n+ u_n(t_n)$ with $t_n>0$ chosen to be the smallest number satisfying (\ref{4.2}).
The iteration is then terminated by the discrepancy principle (\ref{4.3}) to output an
approximate solution $x_{n_\d}$.

We will use $x_{n_\d}$, constructed from these four inexact Newton regularization methods in Hilbert scales,
to approximate the true solution $x^\dag$ of (\ref{1})
and derive the order optimal convergence rate when $x_0-x^\dag\in \X_\mu$ with $s<\mu\le b+2s$.
We need the following condition on the nonlinear operator $F$.

\begin{Assumption}\label{A1}
(a) There exist constants $a\ge 0$ and $0<m\le M<\infty$ such that
$$
m\|h\|_{-a}\le \|F'(x) h\|\le M \|h\|_{-a}, \quad h\in \X
$$
for all $x\in B_\rho(x^\dag)$.

(b) $F$ is properly scaled so that $\|F'(x) L^{-s}\|_{\X\to \Y}
\le \Theta<1$ for all $x\in B_\rho(x^\dag)$, where $s\ge -a$.

(c) There exist $0<\beta\le 1$, $0\le b\le a$ and $K_0\ge 0$ such that
\begin{equation*}
\|F'(x)-F'(z)\|_{\X_{-b}\to \Y}\le K_0\|x-z\|^\beta
\end{equation*}
for all $x, z\in B_\rho(x^\dag)$.

\end{Assumption}

This condition was first used in \cite{N2000} for the convergence
analysis of the nonlinear Landweber iteration in Hilbert scales. It was then used recently in \cite{HP08}
and \cite{JT2011} for nonlinear Tikhonov regularization and some Newton-type regularization methods
in Hilbert scales respectively. One can consult \cite{N2000,HP08} for several examples
satisfying Assumption \ref{A1}.

\begin{theorem}\label{T2}
Let $F$ satisfy Assumption \ref{A1} with $s\ge (a-b)/\beta$, let $\tau>2$ and $0<\eta<1$
be such that $ \tau\eta>2$, and let $x_0\in \D(F)$ be such that $\gamma^s\|x_0-x^\dag\|_s\le \rho$.
If $K_0\|x_0-x^\dag\|_s^\beta$ is sufficiently small, then the inexact Newton regularization
methods with the inner scheme defined by Landweber iteration, the implicit iteration, the asymptotic regularization,
or Tikhonov regularization in Hilbert scales are well-defined and terminate after $n_\d=O(1+|\log \d|)$ iterations.
If, in addition, $x_0-x^\dag \in \X_\mu$ for some $s<\mu \le b+2s$ and
$K_0\|x_0-x^\dag\|_\mu^\beta$ is sufficiently small, then there holds
$$
\|x_{n_\d}-x^\dag\|_r \le C \|x_0-x^\dag\|_\mu^{\frac{a+r}{a+\mu}} \d^{\frac{\mu-r}{a+\mu}}
$$
for all $r\in [-a, s]$, where $C$ is a constant independent of $\d$ and $\|x_0-x^\dag\|_\mu$.
\end{theorem}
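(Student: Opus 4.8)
The plan is to establish Theorem \ref{T2} (Theorem \ref{T1} then follows by choosing $L$ adapted to $F'(x^\dag)$, so that Assumption \ref{A0} yields Assumption \ref{A1} with $s=0$, $b=a$, $\b=1$, and translating the spectral source condition through property (c)). The first move is to recast the inner scheme spectrally. I would set $A_n:=F'(x_n)L^{-s}$, so that $A_n^*=L^{-s}F'(x_n)^*$ and, using the commutation $g_t(A_n^*A_n)A_n^*=A_n^*g_t(A_nA_n^*)$, rewrite \eqref{5.7.1} as $u_n(t)=L^{-s}g_t(A_n^*A_n)A_n^*(y^\d-F(x_n))$. Introducing the residual function $r_t(\la):=1-\la g_t(\la)$, the linearized residual in \eqref{4.2} becomes $y^\d-F(x_n)-F'(x_n)u_n(t)=r_t(A_nA_n^*)(y^\d-F(x_n))$, so that $t_n$ is the smallest $t$ with $\norm{r_t(A_nA_n^*)s_n}\le\eta\norm{s_n}$, where $s_n:=y^\d-F(x_n)$. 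Writing $\xi_n:=L^s(x_n-x^\dag)$ and $R_n:=F(x_n)-F(x^\dag)-F'(x_n)(x_n-x^\dag)$, the update $x_{n+1}=x_n+u_n(t_n)$ and the identity $A_n\xi_n=F'(x_n)(x_n-x^\dag)=v-R_n-s_n$ give the error recursion
\begin{equation*}
\xi_{n+1} = r_{t_n}(A_n^*A_n)\,\xi_n + g_{t_n}(A_n^*A_n)A_n^*(v - R_n), \qquad v:=y^\d-y,
\end{equation*}
with the remainder controlled by Assumption \ref{A1}(c) as $\norm{R_n}\le\frac{K_0}{1+\b}\norm{x_n-x^\dag}^\b\,\norm{x_n-x^\dag}_{-b}$, which is the origin of the smallness requirement on $K_0\norm{x_0-x^\dag}_s^\b$.

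For the first part (Section 3) I would prove by induction on $n$ that $t_n$ exists, that $x_n\in B_\rho(x^\dag)$, and that $\norm{\xi_n}$ decays monotonically. Existence of $t_n$ holds because $r_t\to0$ on $\overline{\mathcal R(A_n)}$ as $t\to\infty$ while the component of $s_n$ in $\N(A_n^*)$ equals the projection of $v-R_n$ and is therefore bounded by $\d+\norm{R_n}$; as long as \eqref{4.3} has not triggered one has $\norm{s_n}>\tau\d$, and the hypotheses $\tau>2$, $\tau\eta>2$ together with the smallness of $K_0\norm{\cdot}_s^\b$ make this kernel component $<\eta\norm{s_n}$. Monotonicity is the technical core: from the recursion, the inner discrepancy $\norm{r_{t_n}(A_nA_n^*)s_n}\le\eta\norm{s_n}$, and the filter bounds $|r_t|\le1$ and $\sqrt{\la}\,|g_t(\la)|\le c$, one derives a fundamental inequality of the schematic form $\norm{\xi_{n+1}}^2\le\norm{\xi_n}^2-c\,\norm{s_n}(\norm{s_n}-2\d-\cdots)$ whose right-hand correction is strictly negative while $\norm{s_n}>\tau\d$, the conditions $\tau,\tau\eta>2$ being exactly what make the coefficient positive. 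Monotonicity gives $x_n\in B_\rho(x^\dag)$ via $\norm{x_n-x^\dag}\le\gamma^s\norm{\xi_n}\le\gamma^s\norm{\xi_0}\le\rho$, and the same estimate shows $\norm{s_{n+1}}\le\theta\norm{s_n}$ for some $\theta<1$, so \eqref{4.3} stops after $n_\d=O(1+|\log\d|)$ steps.

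For the rate (Section 4) I would reduce to two endpoint estimates and interpolate. At the low-order end the stopping rule gives $\norm{s_{n_\d}}\le\tau\d$, and since $A_{n_\d}\xi_{n_\d}=v-R_{n_\d}-s_{n_\d}$ while $\norm{A_{n_\d}h}\asymp\norm{h}_{-(a+s)}$ by \eqref{2.3} (Assumption \ref{A1}(a)), one obtains $\norm{x_{n_\d}-x^\dag}_{-a}=\norm{\xi_{n_\d}}_{-(a+s)}\le C\d$. Granting the high-order bound $\norm{x_{n_\d}-x^\dag}_\mu\le C\norm{x_0-x^\dag}_\mu$, the interpolation inequality \eqref{inter} with endpoints $-a$ and $\mu$ yields, for every $r\in[-a,s]\subset[-a,\mu]$,
\begin{equation*}
\|x_{n_\d}-x^\dag\|_r \le \|x_{n_\d}-x^\dag\|_{-a}^{\frac{\mu-r}{a+\mu}}\,\|x_{n_\d}-x^\dag\|_\mu^{\frac{r+a}{a+\mu}} \le C\,\d^{\frac{\mu-r}{a+\mu}}\,\|x_0-x^\dag\|_\mu^{\frac{a+r}{a+\mu}},
\end{equation*}
which is exactly the claimed rate.

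The main obstacle is the high-order estimate $\norm{x_{n_\d}-x^\dag}_\mu\le C\norm{x_0-x^\dag}_\mu$. Iterating the recursion expresses $\xi_n$ as a propagated initial error $\prod_{k<n}r_{t_k}(A_k^*A_k)\xi_0$ plus perturbation terms driven by $v-R_k$. The danger is that transferring the source condition $\xi_0\in\X_{\mu-s}$ through the \emph{$n$-dependent} operators $A_k^*A_k$, paying a constant from \eqref{2.3} at each step, would produce a factor $C^{n_\d}=\d^{-O(1)}$ and ruin the rate. Avoiding this is the heart of the argument: one uses that $\mu-s\le b+s\le a+s$, so that $r_t$ has enough qualification to absorb the source exponent via $\la^{(\mu-s)/(2(a+s))}|r_t(\la)|\le c\,t^{-(\mu-s)/(2(a+s))}$; that all $A_k^*A_k$ are \emph{uniformly} spectrally equivalent to $L^{-2(a+s)}$ with $n$-independent constants by Assumption \ref{A1}(a); and that consecutive operators are close since $x_{k+1}-x_k=u_k(t_k)$ is small by Assumption \ref{A1}(c). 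Combined with the geometric decay of $\norm{s_k}$ from the first part, so that the perturbation contributions sum to a convergent rather than growing series, these facts give a bound uniform in $n_\d$, following the mechanism of \cite{H2010,JT2011,LR2010}. I expect this uniform control of the high-order norm against constant blow-up to be the one genuinely delicate step.
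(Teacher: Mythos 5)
Your overall architecture is right: well-posedness of $t_n$ and monotonicity of $\|x_n-x^\dag\|_s$ by an induction exploiting $\tau>2$, $\tau\eta>2$; geometric decay of the residuals giving $n_\d=O(1+|\log\d|)$; and the final rate by interpolating between a low-order bound $\|x_{n_\d}-x^\dag\|_{-a}\lesssim\d$ and a uniform high-order bound. You also correctly identify the one genuinely delicate step. But the mechanism you propose for that step is not the one that works, and as sketched it has a real gap.

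Iterating your recursion $\xi_{n+1}=r_{t_n}(A_n^*A_n)\xi_n+g_{t_n}(A_n^*A_n)A_n^*(v-R_n)$ produces the ordered product $\prod_{k}r_{t_k}(A_k^*A_k)$ of functions of \emph{different, non-commuting} operators. The three facts you invoke --- uniform spectral equivalence of the $A_k^*A_k$, closeness of consecutive operators, and geometric decay of the residuals --- do not by themselves tame this product: spectral equivalence gives you a constant per factor (exactly the $C^{n_\d}=\d^{-O(1)}$ blow-up you are trying to avoid), and there is no functional calculus for the mixed product that lets you absorb the source exponent $\la^{(\mu-s)/(2(a+s))}$ as you do for a single operator. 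The paper's resolution is structurally different: it recenters the recursion at the \emph{fixed} operator $A=F'(x^\dag)L^{-s}$, writing $e_{n+1}=L^{-s}r_{t_n}(A^*A)L^se_n+L^{-s}g_{t_n}(A^*A)A^*(y^\d-F(x_n)+Te_n)+L^{-s}[g_{t_n}(A_n^*A_n)A_n^*-g_{t_n}(A^*A)A^*](y^\d-F(x_n))$, so that telescoping yields $\prod_jr_{t_j}(A^*A)$ --- a function of a single operator, to which the bounds $\la^\nu\prod_jr_{t_j}(\la)\le(s_n-s_j)^{-\nu}$ apply --- while the operator variation enters only through the difference $g_{t_j}(A_j^*A_j)A_j^*-g_{t_j}(A^*A)A^*$, controlled by a commutator-type estimate (Lemma \ref{L10}) with the small factor $K_0\|x_j-x^\dag\|^\beta$. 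Moreover, the convergence of the perturbation sums is \emph{not} driven by the geometric decay of $\|y^\d-F(x_k)\|$ in $k$; it requires the polynomial decay $\|Te_j\|\lesssim\|e_0\|_\mu\,s_{j+1}^{-(a+\mu)/(2(a+s))}$ in the cumulative inner-iteration count $s_j=\sum_{k<j}t_k$, which must be proved \emph{simultaneously} with $\|e_j\|_\mu\lesssim\|e_0\|_\mu$ in a joint induction and then matched against the kernels $(s_l-s_j)^{-p}$ via the summation Lemma \ref{L2}, using also the a priori bound $\sum_nt_n\|y^\d-F(x_n)\|^2\lesssim\|e_0\|_s^2$ from the monotonicity argument. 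Your sketch contains none of this.

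A secondary, repairable issue: you interpolate between $-a$ and $\mu$ at $n=n_\d$, which requires the $\mu$-bound \emph{at} $n_\d$; the induction that proves it uses $\|y^\d-F(x_l)\|>\tau\d$ and therefore only reaches $n_\d-1$. The paper instead interpolates between $\mu$ and $-a$ at $n_\d-1$ to get the $s$-norm rate there, transports it to $n_\d$ by the monotonicity $\|e_{n_\d}\|_s\le\|e_{n_\d-1}\|_s$, proves $\|e_{n_\d}\|_{-a}\lesssim\d$ separately, and then interpolates between $s$ and $-a$ --- which is also why the theorem is stated only for $r\in[-a,s]$.
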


The proof of Theorem \ref{T2} will be given in the next two sections. Here some remarks are in order.

\begin{remark}
When the inner scheme is defined by the asymptotic regularization or Tikhonov regularization, there is flexibility
to choose $t_n$ to satisfy
$$
\eta_1\|y^\d-F(x_n)\|\le \|y^\d-F(x_n)-F'(x_n) u_n(t_n)\|\le \eta_2\|y^\d-F(x_n)\|
$$
with some numbers $0<\eta_1\le \eta_2<1$. Furthermore, we only need $\tau>2$ and $\tau \eta_1>1$ in the convergence analysis.
\end{remark}

\begin{remark}
When $s>(a-b)/\beta$, the same order optimal convergence rate in Theorem \ref{T2} holds
for $x_0-x^\dag\in \X_\mu$ with $s\le \mu\le b+2s$ which can be seen from the proof of Lemma \ref{P1} in Section 4.
\end{remark}

\begin{remark}
If the Fr\'{e}chet derivative $F'(x)$ satisfies the Lipschitz condition
$$
\|F'(x)-F'(z)\|\le K_0\|x-z\|, \qquad x, z\in B_\rho(x^\dag),
$$
then Assumption \ref{A1} (c) holds with $b=0$ and
$\beta=1$, and thus, for these inexact Newton regularization methods in Hilbert scales with $s\ge a$,
the order optimal convergence rates hold for $x_0-x^\dag\in \X_\mu$ with
$s<\mu\le 2s$.
\end{remark}

\begin{remark}
We indicate how Theorem \ref{T1} can be derived from Theorem \ref{T2}. First, we note that
Assumption \ref{A0} (a) implies
$$
\|F(x)-F(z)-F'(z)(x-z)\|\le \frac{1}{2} K_0\|x-z\| \|F'(z) (x-z)\|
$$
for all $x, z\in B_\rho(x^\dag)$. One can then follow the proofs in
Section 3 to show that, if $x_0\in B_\rho(x^\dag)$ and $K_0\|x_0-x^\dag\|$ is sufficiently small,
then these inexact Newton regularization methods are well-defined and
$$
\|x_{n+1}-x^\dag\|\le \|x_n-x^\dag\|, \quad n=0, \cdots, n_\d-1
$$
which implies $x_n\in B_\rho(x^\dag)$ for $0\le n\le n_\d$. By shrinking the ball $B_\rho(x^\dag)$
if necessary, we can derive from Assumption \ref{A0} (a) that there exist two constants
$0<C_0\le C_1<\infty$ such that
\begin{equation}\label{5.2.1}
C_0\|F'(z) h\|\le \|F'(x) h\|\le C_1\|F'(z) h\|, \quad h\in \X
\end{equation}
for all $x, z\in B_\rho(x^\dag)$. This implies that all the operators $F'(x)$ have the
same null space $\N$ as long as $x\in B_\rho(x^\dag)$. By the condition of Theorem \ref{T1} we have
$x_0-x^\dag\in \N^\perp$. By the definition of $\{x_n\}$ we also have
$x_{n+1}-x_n \in \R(F'(x_n)^*)\subset \N^\perp$ for $n=0, \cdots, n_\d-1$. By considering the
operator $G(z):=F(z+x_0)$ if necessary, we may assume $x_0=0$. Therefore $x^\dag, x_n\in \N^\perp$ for
$n=0, \cdots, n_\d$, and we may consider the equation (\ref{1}) on $\N^\perp$. Consequently we may assume
$\N=\{0\}$, i.e. each $F'(x)$ is injective for $x\in B_\rho(x^\dag)$.

Now we introduce the operator $L:=(F'(x^\dag)^* F'(x^\dag))^{-1/2}$ which is clearly
densely defined self-adjoint strictly positive linear operator in $\X$ satisfying
$$
\|x\|^2\le \Theta (L x, x), \quad x\in \D(L).
$$
From (\ref{5.2.1}) it follows that $C_0\|h\|_{-1}\le \|F'(x) h\|\le C_1\|h\|_{-1}$ which implies
Assumption \ref{A1} (a) with $a=1$. Moreover, from Assumption \ref{A0} (b) it follows
for $x,z\in B_\rho(x^\dag) $that
\begin{align*}
\|[F'(x)-F'(z)] \|_{\X_{-1}\to \Y}=\|[F'(x)-F'(z)] L\|_{\X\to \Y}\le K_0\|x-z\|\|F'(z) L\|_{\X\to \Y}.
\end{align*}
Since (\ref{5.2.1}) implies $\|F'(z) L\|_{\X\to \Y}\le C_1$, Assumption \ref{A1} (c) holds with
$b=1$ and $\beta=1$. Since $\R((F'(x^\dag)F'(x^\dag))^\mu)=\X_{2\mu}$,
Theorem \ref{T1} follows immediately from Theorem \ref{T2} with $s=0$.

\end{remark}

\section{\bf Monotonicity of the error}
\setcounter{equation}{0}

We start with a simple consequence of
Assumption \ref{A1} which will be used frequently.

\begin{lemma}\label{L2.0}
Let $F$ satisfy Assumption \ref{A1} and let $x, z\in B_\rho(x^\dag)$. If $t\ge 0$ then
\begin{equation}\label{5.1.1}
\|F(x)-F(z)-F'(z) (x-z)\|\le \frac{1}{1+\beta}K_0 \|x-z\|_t^{\frac{a(1+\beta)-b}{a+t}}
\|x-z\|_{-a}^{\frac{t(1+\beta)+b}{a+t}}.
\end{equation}
If, in addition, $t\ge (a-b)/\beta$, then
\begin{equation}\label{5.1.2}
\|F(x)-F(z)-F'(z) (x-z)\|\le \frac{1}{1+\beta}\gamma^{t\beta+b-a} K_0 \|x-z\|_t^\beta
\|x-z\|_{-a}.
\end{equation}
\end{lemma}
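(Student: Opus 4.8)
The plan is to start from the integral remainder formula for the Fr\'{e}chet derivative. Writing
$$
F(x)-F(z)-F'(z)(x-z)=\int_0^1 \left[F'(z+\theta(x-z))-F'(z)\right](x-z)\, d\theta,
$$
I would estimate the integrand using Assumption \ref{A1}(c). Since $z+\theta(x-z)$ lies on the segment between $z$ and $x$, and $B_\rho(x^\dag)$ is convex, this point stays in $B_\rho(x^\dag)$, so part (c) applies with the pair $\bigl(z+\theta(x-z),z\bigr)$ whose distance is $\theta\|x-z\|$. This gives the operator-norm bound $\|F'(z+\theta(x-z))-F'(z)\|_{\X_{-b}\to\Y}\le K_0\theta^\beta\|x-z\|^\beta$, and applying it to the vector $(x-z)\in\X_{-b}$ yields the factor $\|x-z\|_{-b}$. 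Integrating $\theta^\beta$ over $[0,1]$ produces the constant $1/(1+\beta)$. So after this first step I expect
$$
\|F(x)-F(z)-F'(z)(x-z)\|\le \frac{1}{1+\beta}K_0\|x-z\|^\beta\|x-z\|_{-b}.
$$

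The second step is to rewrite the two norms $\|x-z\|^\beta=\|x-z\|_0^\beta$ and $\|x-z\|_{-b}$ in terms of $\|x-z\|_t$ and $\|x-z\|_{-a}$, which is exactly what the interpolation inequality \eqref{inter} provides. For $\|x-z\|_0$ I would interpolate between the indices $-a$ and $t$ (so $p=-a$, $q=0$, $r=t$), giving $\|x-z\|_0\le\|x-z\|_{-a}^{t/(a+t)}\|x-z\|_t^{a/(a+t)}$; for $\|x-z\|_{-b}$ I interpolate between the same endpoints (now $q=-b$), giving $\|x-z\|_{-b}\le\|x-z\|_{-a}^{(t+b)/(a+t)}\|x-z\|_t^{(a-b)/(a+t)}$. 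This requires $-a\le -b\le t$, which holds since $0\le b\le a$ and $t\ge0$. Multiplying the $\beta$-th power of the first with the second and collecting the exponents of $\|x-z\|_t$ and $\|x-z\|_{-a}$ should reproduce exactly the exponents $\frac{a(1+\beta)-b}{a+t}$ and $\frac{t(1+\beta)+b}{a+t}$ claimed in \eqref{5.1.1}. The arithmetic to verify that the exponents sum correctly and match is the one routine bookkeeping task, but it is purely mechanical.

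For the second inequality \eqref{5.1.2}, under the extra hypothesis $t\ge(a-b)/\beta$, I would show the exponent $\frac{a(1+\beta)-b}{a+t}$ on $\|x-z\|_t$ in \eqref{5.1.1} is at most $\beta$ (this is equivalent to $a(1+\beta)-b\le\beta(a+t)$, i.e. $a-b\le\beta t$, which is the hypothesis). I would then trade the excess power of $\|x-z\|_t$ for $\|x-z\|_{-a}$ so as to land on the target exponents $\beta$ and $1$: the difference $\beta-\frac{a(1+\beta)-b}{a+t}=\frac{t\beta+b-a}{a+t}$ in the $t$-exponent is converted using the embedding \eqref{embed}, which states $\|x-z\|_{-a}\le\gamma^{t+a}\|x-z\|_t$, costing a factor $\gamma^{(t+a)\cdot\frac{t\beta+b-a}{a+t}}=\gamma^{t\beta+b-a}$. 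This is precisely the power of $\gamma$ appearing in \eqref{5.1.2}, and it reduces the $\|x-z\|_{-a}$ exponent from $\frac{t(1+\beta)+b}{a+t}$ down to $1$.

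The main conceptual obstacle, if any, is keeping the interpolation endpoints and the direction of the embedding inequality straight, since $b$ enters with the opposite sign from the way it appears in the Hilbert-scale index; everything else is substitution. I would double-check the monotonicity direction in \eqref{embed} (norms with a larger index dominate) so that the factor of $\gamma$ genuinely upper-bounds the expression rather than the reverse.
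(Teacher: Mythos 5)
Your proposal is correct and follows essentially the same route as the paper: the integral remainder formula plus Assumption \ref{A1}(c) gives the intermediate bound $\frac{1}{1+\beta}K_0\|x-z\|^\beta\|x-z\|_{-b}$, the interpolation inequality (\ref{inter}) with endpoints $-a$ and $t$ yields (\ref{5.1.1}), and the embedding (\ref{embed}) converts the excess $\|x-z\|_{-a}$ exponent into the stated power of $\gamma$ for (\ref{5.1.2}). The exponent bookkeeping in your sketch checks out exactly.
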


\begin{proof} From Assumption \ref{A1} (c) and the identity
$$
F(x)-F(z)-F'(z)(x-z)=\int_0^1 \left[F'(z+t(x-z))-F'(z)\right] (x-z) dt
$$
it follows immediately that
\begin{equation}\label{5.1.3}
\|F(x)-F(z)-F'(z)(x-z)\|\le \frac{1}{1+\beta} K_0\|x-z\|^\beta \|x-z\|_{-b}.
\end{equation}
With the help of the interpolation inequality (\ref{inter}) we have
$$
\|x-z\|\le \|x-z\|_t^{\frac{a}{a+t}} \|x-z\|_{-a}^{\frac{t}{a+t}} \quad
\mbox{and}\quad \|x-z\|_{-b} \le \|x-z\|_t^{\frac{a-b}{a+t}} \|x-z\|_{-a}^{\frac{t+b}{a+t}}.
$$
This together with (\ref{5.1.3}) gives (\ref{5.1.1}). If, in addition, $t\ge (a-b)/\beta$,
then we have $[t(1+\beta)+b]/(a+t)\ge 1$. Thus, by using $\|x-z\|_{-a}\le \gamma^{a+t} \|x-z\|_t$
which follows from the embedding (\ref{embed}), we can derive (\ref{5.1.2}) immediately from (\ref{5.1.1}).
\hfill $\Box$
\end{proof}

In this section we will use the ideas from \cite{H97,HNS96,LR2010} to show that the four inexact Newton
regularization methods in Hilbert scales stated in Theorem \ref{T2} are well-defined and for the error term
$$
e_n:=x_n-x^\dag
$$
there holds $\|e_{n+1}\|_s\le \|e_n\|_s$ for $n=0, \cdots, n_\d-1$. We will use the notation
$$
T:=F'(x^\dag), \quad T_n:=F'(x_n), \quad A:=T L^{-s} \quad \mbox{and} \quad A_n:=T_n L^{-s}.
$$
It follows easily from the definition (\ref{5.7.1}) of $\{u_n(t)\}$ that
\begin{equation}\label{g1.1}
u_n(t)=L^{-s} g_t(A_n^* A_n) A_n^* \left(y^\d-F(x_n)\right)
\end{equation}
and
\begin{equation}\label{r1}
y^\d-F(x_n)-T_n u_n(t)=r_t(A_nA_n^*) \left(y^\d-F(x_n)\right),
\end{equation}
where $r_t(\la):=1-\la g_t(\la)$ denotes the residual function associated with $g_t$.
For the spectral filter functions given in (\ref{5.7.2}), it is easy to see that
$\lim_{t\rightarrow \infty} r_t(\la)=0$ for each $\la>0$. This implies that
\begin{align}\label{5.7.4}
\lim_{t\rightarrow \infty} \|y^\d-F(x_n)-T_n u_n(t)\|&=\|P_{\R(A_n)^\perp}(y^\d-F(x_n))\|,
\end{align}
where $P_{\R(A_n)^\perp}$ denotes the orthogonal projection of $\Y$ onto $\R(A_n)^\perp$,
the orthogonal complement of the range $\R(A_n)$ of $A_n$.

\begin{lemma}\label{L2.1}
Let $F$ satsify Assumption \ref{A1} with $s\ge (a-b)/\beta$, let $\tau>1$ and $0<\eta<1$
satisfy $\tau \eta >1$, and let $x_0\in \D(F)$ be such that $\gamma^s \|e_0\|_s\le \rho$.
Assume that $K_0\|e_0\|_s^\beta$ is sufficiently small. If $\|y^\d-F(x_n)\|>\tau \d$
and $\|e_n\|_s\le \|e_0\|_s$, then $t_n$ is well-defined and $t_n\ge c_0$ for some constant
$c_0>0$ independent of $n$ and $\d$.
\end{lemma}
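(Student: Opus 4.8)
The plan is to prove the two assertions separately: first that the stopping criterion (\ref{4.2}) is satisfiable, so that $t_n$ exists, and then that the resulting $t_n$ admits a lower bound independent of $n$ and $\d$. For well-definedness, by the limit relation (\ref{5.7.4}) it suffices to establish the strict inequality
$$
\|P_{\R(A_n)^\perp}(y^\d-F(x_n))\|< \eta \|y^\d-F(x_n)\|,
$$
since the left-hand side of (\ref{4.2}) decreases to this projection norm as $t\to\infty$. To bound the projection I would split $F(x^\dag)-F(x_n)=-T_n e_n+w_n$ with the linearization remainder $w_n:=F(x^\dag)-F(x_n)+T_n e_n$, and observe that $T_n e_n=A_n(L^s e_n)\in\R(A_n)$ is annihilated by $P_{\R(A_n)^\perp}$. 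Hence $\|P_{\R(A_n)^\perp}(y^\d-F(x_n))\|\le \|(y^\d-y)+w_n\|\le \d+\|w_n\|$.

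Next I would control $\|w_n\|$. Lemma \ref{L2.0} in the form (\ref{5.1.2}), valid because $s\ge (a-b)/\beta$, applied with $t=s$ gives $\|w_n\|\le c\,K_0\|e_n\|_s^\beta\|e_n\|_{-a}$, while Assumption \ref{A1}(a) yields $\|e_n\|_{-a}\le m^{-1}\|T_n e_n\|$. Since $T_n e_n=F(x_n)-y+w_n$, one has $\|T_n e_n\|\le \|y^\d-F(x_n)\|+\d+\|w_n\|$; substituting produces a self-referential inequality for $\|w_n\|$ which, using $\|e_n\|_s\le\|e_0\|_s$ and the smallness of $K_0\|e_0\|_s^\beta$, can be absorbed to give $\|w_n\|\le \tilde c\,(\|y^\d-F(x_n)\|+\d)$ with $\tilde c$ as small as we wish. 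Combining this with the running hypothesis $\|y^\d-F(x_n)\|>\tau\d$ gives
$$
\|P_{\R(A_n)^\perp}(y^\d-F(x_n))\|\le (1+\tilde c)\d+\tilde c\,\|y^\d-F(x_n)\|< \left(\frac{1+\tilde c}{\tau}+\tilde c\right)\|y^\d-F(x_n)\|.
$$
Because $\tau\eta>1$ forces $1/\tau<\eta$, the coefficient tends to $1/\tau<\eta$ as $\tilde c\to 0$, so for $K_0\|e_0\|_s^\beta$ small enough the strict inequality holds and $t_n$ is well-defined.

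For the lower bound $t_n\ge c_0$, I would use that the criterion fails for every $t<t_n$, i.e. $\|r_t(A_nA_n^*)(y^\d-F(x_n))\|>\eta\|y^\d-F(x_n)\|$ there, and show this is automatic for small $t$. Since $\|A_n\|\le\Theta<1$ by Assumption \ref{A1}(b), the spectrum of $A_nA_n^*$ lies in $[0,\Theta^2]$, so I estimate $1-r_t(\la)=\la g_t(\la)$ uniformly on this interval. For Landweber and the implicit iteration $g_t\equiv 0$ when $t<1$ (the sum is empty), giving $t_n\ge 1$ at once; for the asymptotic regularization $1-r_t(\la)=1-e^{-t\la}\le t\la$ and for Tikhonov $1-r_t(\la)=t\la/(1+t\la)\le t\la$, so in both cases $\|y^\d-F(x_n)-T_n u_n(t)\|\ge (1-t\Theta^2)\|y^\d-F(x_n)\|$, which exceeds $\eta\|y^\d-F(x_n)\|$ whenever $t<(1-\eta)/\Theta^2$. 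Taking $c_0=\min\{1,(1-\eta)/\Theta^2\}$ settles the claim.

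The main obstacle is the well-definedness step: the quantity deciding whether the inner scheme can reach the tolerance $\eta$ is the projection of the residual onto $\R(A_n)^\perp$, and bounding it forces one through the nonlinear remainder $w_n$, whose estimate via Lemma \ref{L2.0} is phrased in terms of $\|e_n\|_{-a}$ rather than the residual itself. Closing the resulting implicit inequality and verifying that the final constant can be pushed below $\eta$—which works precisely because of the hypothesis $\tau\eta>1$ together with the smallness of $K_0\|e_0\|_s^\beta$—is the delicate part; by contrast, the lower bound on $t_n$ is routine spectral calculus on $[0,\Theta^2]$.
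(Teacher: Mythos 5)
Your proof is correct and takes essentially the same route as the paper's: you reduce well-definedness to the strict bound $\|P_{\R(A_n)^\perp}(y^\d-F(x_n))\|<\eta\|y^\d-F(x_n)\|$, estimate this via the Taylor remainder $w_n$ using (\ref{5.1.2}) and Assumption \ref{A1}(a), and close the self-referential inequality by the smallness of $K_0\|e_0\|_s^\beta$ together with $\tau\eta>1$, exactly as in the paper (which works with the equal quantity $\|y^\d-F(x_n)+T_ne_n\|=\|(y^\d-y)+w_n\|$). The only cosmetic differences are that you should note explicitly that $\|e_n\|\le\gamma^s\|e_n\|_s\le\rho$ places $x_n\in B_\rho(x^\dag)$ before invoking Assumption \ref{A1}, and that your lower bound $c_0=\min\{1,(1-\eta)/\Theta^2\}$ for the continuous schemes differs from the paper's $\log(1/\eta)$ and $1/\eta-1$ obtained by evaluating $r_{t_n}$ at $\la=1$; both are valid constants independent of $n$ and $\d$.
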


\begin{proof}
From (\ref{embed}) and the given conditions it follows that
$\|e_n\|\le \gamma^s \|e_n\|_s \le \gamma^s \|e_0\|_s\le \rho$ which implies $x_n\in B_\rho(x^\dag)$.
Since $\|e_n\|_s\le \|e_0\|_s<\infty$ implies $L^s e_n\in \X$, we have
\begin{align*}
\|P_{\R(A_n)^\perp} (y^\d-F(x_n))\| &\le \|y^\d-F(x_n)+A_n L^s e_n\|=\|y^\d-F(x_n)+ T_n e_n\|.
\end{align*}
In order to show that $t_n$ is well-defined, in view of (\ref{5.7.4}) it suffices to show
\begin{align}\label{2.10}
\|y^\d-F(x_n) +T_n e_n\| < \eta \|y^\d-F(x_n)\|.
\end{align}
Since $s\ge (a-b)/\beta$, we can use (\ref{1.2}) and (\ref{5.1.2}) in Lemma \ref{L2.0} to derive
$$
\|y^\d-F(x_n) +T_n e_n\| \le \d+ \frac{1}{1+\beta} \gamma^{s\beta+b-a} K_0\|e_n\|_s^\beta \|e_n\|_{-a}
$$
Now by using Assumption \ref{A1} (a), $\|e_n\|_s\le \|e_0\|_s$ and  $\tau \d< \|y^\d-F(x_n)\|$, we obtain
with $C=\gamma^{s\beta+b-a}/[(1+\beta) m]$ that
\begin{align*}
\|y^\d-F(x_n) +T_n e_n\| &\le  \frac{1}{\tau} \|y^\d-F(x_n)\|
+ C K_0 \|e_0\|_s^\beta \|T_n e_n\|\\
&\le \left(\frac{1}{\tau}+ C K_0\|e_0\|_s^\beta\right) \|y^\d-F(x_n)\|\\
&\quad \, + C K_0\|e_0\|_s^\beta \|y^\d-F(x_n) +T_n e_n\|.
\end{align*}
Since $\tau \eta>1$, we therefore obtain (\ref{2.10}) if $K_0\|e_0\|_s$ is sufficiently small.

For the inner scheme defined by Landweber iteration or the implicit iteration in Hilbert scales, it is obvious that
$t_n$ is an integer with $t_n\ge 1$.  For the inner scheme defined by the asymptotic regularization
or Tikhonov regularization in Hilbert scales, we have
$$
\eta \|y^\d-F(x_n)\|=\|y^\d-F(x_n)-T_n u_n(t_n)\|=\|r_{t_n}(A_nA_n^*) (y^\d-F(x_n))\|
$$
where $r_t(\la)=e^{-t \la}$ or $r_t(\la)=(1+t\la)^{-1}$. Since $\|A_n\|\le 1$, we can obtain
either $e^{-t_n}\le \eta$ or $(1+t_n)^{-1} \le \eta$. Therefore $t_n\ge \log(1/\eta)$ or $t_n\ge 1/\eta-1$.
\hfill $\Box$
\end{proof}

\begin{lemma}\label{L2.2}
Let $F$ satisfy Assumption \ref{A1} with $s\ge (a-b)/\beta$, let $\tau>2$ and $0<\eta<1$
be such that $ \tau\eta>2$, and let $x_0\in \D(F)$ be such that $\gamma^s\|e_0\|_s\le \rho$.
If $K_0\|e_0\|_s^\beta$ is sufficiently small, then the four inexact Newton regularization methods
in Hilbert scales stated in Theorem \ref{T2} are well-defined and terminate
after $n_\d<\infty$ iterations, and
\begin{equation}\label{4.30.1}
\sum_{n=0}^{n_\d-1} t_n \|y^\d-F(x_n)\|^2 \le C_2 \|e_0\|_s^2
\end{equation}
for some constant $C_2>0$. Moreover
\begin{equation}\label{2.11}
\|x_{n+1}-x^\dag\|_s\le \|x_n-x^\dag\|_s
\end{equation}
for $n=0, \cdots, n_\d-1$.
\end{lemma}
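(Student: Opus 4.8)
The plan is to prove the three assertions simultaneously, by induction on $n$, the engine being a single ``energy decrease'' estimate of the form
\[
\|e_n\|_s^2-\|e_{n+1}\|_s^2\ \ge\ C_3\,t_n\,\|y^\d-F(x_n)\|^2,\qquad C_3>0,
\]
valid for every $n<n_\d$. Such an estimate yields the monotonicity (\ref{2.11}) at once (its right-hand side is nonnegative); summing over $n$ telescopes to $\sum_{n<n_\d}C_3\,t_n\|y^\d-F(x_n)\|^2\le\|e_0\|_s^2$, which is (\ref{4.30.1}); and since Lemma \ref{L2.1} gives $t_n\ge c_0$ while the stopping rule gives $\|y^\d-F(x_n)\|>\tau\d$ for $n<n_\d$, each summand exceeds $C_3c_0\tau^2\d^2$, forcing $n_\d<\infty$. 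The induction hypothesis is $\|e_k\|_s\le\|e_0\|_s$ for $k\le n$; this places $x_n\in B_\rho(x^\dag)$, so together with $\|y^\d-F(x_n)\|>\tau\d$ Lemma \ref{L2.1} applies and $t_n$ is well defined and bounded below, after which the energy estimate returns $\|e_{n+1}\|_s\le\|e_n\|_s$ and closes the induction.

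Writing $w_n:=L^s e_n$ (so $\|w_n\|=\|e_n\|_s$), $v_n:=y^\d-F(x_n)$, $\widetilde G_n:=g_{t_n}(A_nA_n^*)$ and $\widetilde R_n:=r_{t_n}(A_nA_n^*)$, the update $x_{n+1}=x_n+u_n(t_n)$ becomes $w_{n+1}=w_n+g_{t_n}(A_n^*A_n)A_n^* v_n$ by (\ref{g1.1}). Expanding $\|w_{n+1}\|^2$ and using the self-adjointness of the filters, the intertwining $A_n g_{t_n}(A_n^*A_n)=\widetilde G_n A_n$, the identity $A_nA_n^*\widetilde G_n=I-\widetilde R_n$, and $A_n w_n=T_ne_n=h_n-v_n$ with the defect $h_n:=v_n+T_ne_n=y^\d-F(x_n)+T_ne_n$, a direct computation gives the exact identity
\[
\|w_n\|^2-\|w_{n+1}\|^2=\big(\widetilde G_n(I+\widetilde R_n)v_n,v_n\big)-2\big(\widetilde G_n h_n,v_n\big).
\]
Since $r_t(\la)=1-\la g_t(\la)$ gives $g_t(\la)(1+r_t(\la))=\la^{-1}(1-r_t(\la)^2)$, the first (gain) term equals $\int\la^{-1}(1-r_{t_n}(\la)^2)\,d\|E_\la v_n\|^2$, where $E$ is the spectral resolution of $A_nA_n^*$; in particular it dominates $(\widetilde G_n v_n,v_n)=\int\la^{-1}(1-r_{t_n})\,d\|E_\la v_n\|^2\ge0$.

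The crux is a lower bound on the gain term $M:=(\widetilde G_n(I+\widetilde R_n)v_n,v_n)$ carrying the correct factor $t_n$. For each of the four filters one checks the pointwise inequality $\la^{-1}(1-r_t(\la)^2)\ge c'\,t\,r_t(\la)^2$ on $(0,1]$ (equivalently $r_t(\la)^{-2}-1\ge c'\,t\la$, which follows from $e^{x}-1\ge x$, from Bernoulli's inequality, or from $g_t=t\,r_t$ for Tikhonov). Because $r_t(\la)$ is nonincreasing in $t$ and $\|A_n\|\le1$, for every $t\le t_n$ one obtains $M\ge c'\,t\int r_t(\la)^2\,d\|E_\la v_n\|^2=c'\,t\,\|r_t(A_nA_n^*)v_n\|^2$. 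Here the defining \emph{minimality} of $t_n$ enters, not its value at $t_n$: for $t$ just below $t_n$ the discrepancy (\ref{4.2}) is violated, so $\|r_t(A_nA_n^*)v_n\|^2\ge\eta^2\|v_n\|^2$ (with equality in the limit $t\to t_n^-$ for the asymptotic and Tikhonov methods, and via the step $t=t_n-1$ for Landweber and the implicit iteration, the case $t_n=1$ being handled directly through $\la^{-1}(1-r_1^2)\ge1-r_1^2$). Passing to the limit yields
\[
M=\big(\widetilde G_n(I+\widetilde R_n)v_n,v_n\big)\ \ge\ c_3\,\eta^2\,t_n\,\|v_n\|^2 .
\]

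It remains to absorb the perturbation $2(\widetilde G_n h_n,v_n)$, which carries the same factor $t_n$, and this is where the hypothesis $\tau\eta>2$ is consumed. By Cauchy--Schwarz, together with $g_t(\la)\le g_t(0)\le t$ and $(\widetilde G_n v_n,v_n)\le M$, I get $2(\widetilde G_n h_n,v_n)\le2\,(g_{t_n}(0))^{1/2}\|h_n\|\,M^{1/2}\le2\,t_n^{1/2}\|h_n\|\,M^{1/2}$. As in the proof of Lemma \ref{L2.1} (via Lemma \ref{L2.0}, (\ref{1.2}) and Assumption \ref{A1}(a)), together with $\|y^\d-F(x_n)\|>\tau\d$, the defect obeys $\|h_n\|\le\theta\|v_n\|$ with $\theta\to1/\tau$ as $K_0\|e_0\|_s^\beta\to0$. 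Combining with $M\ge c_3\eta^2 t_n\|v_n\|^2$, i.e. $t_n^{1/2}\|v_n\|\le(c_3\eta^2)^{-1/2}M^{1/2}$, gives $2(\widetilde G_n h_n,v_n)\le\kappa M$ with $\kappa$ of order $\theta/\eta$, hence of order $1/(\tau\eta)$. Since $\tau\eta>2$, choosing $K_0\|e_0\|_s^\beta$ small makes $\kappa<1$, so $\|w_n\|^2-\|w_{n+1}\|^2\ge(1-\kappa)M\ge(1-\kappa)c_3\eta^2\,t_n\|v_n\|^2$, which is the required energy decrease with $C_3=(1-\kappa)c_3\eta^2$. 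The main obstacle is exactly the gain estimate with the sharp factor $t_n$: a naive spectral split fails because the discrepancy principle drives the spectral mass of $v_n$ toward large $\la$, so one must exploit the minimality of $t_n$ (the violated discrepancy just below $t_n$) and then balance the two $t_n$-weighted terms using $\tau\eta>2$.
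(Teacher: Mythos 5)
Your proof is essentially correct but follows a genuinely different route from the paper. The paper argues case by case: for Landweber and the implicit iteration it telescopes a per-inner-step estimate $\|e_n+u_{n,k}\|_s^2-\|e_n+u_{n,k-1}\|_s^2\le -c\,\|y^\d-F(x_n)\|^2$ over $k=1,\dots,t_n$ (using $(z_{n,k-1},z_{n,k})\ge 0$, resp. $(z_{n,k},z_{n,k-1})\ge\|z_{n,k}\|^2$ and $\|z_{n,k}\|\ge\frac12\|z_{n,k-1}\|$), for the asymptotic regularization it integrates a differential inequality for $\frac{d}{dt}\|e_n+u_n(t)\|_s^2$, and for Tikhonov it is a one-shot computation; in every case the factor $t_n$ comes from summing/integrating over the inner scheme. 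You instead prove a single exact spectral identity
$\|w_n\|^2-\|w_{n+1}\|^2=(\widetilde G_n(I+\widetilde R_n)v_n,v_n)-2(\widetilde G_n h_n,v_n)$ valid for all four filters (I checked it: it follows from $2g-\la g^2=g(1+r)$), extract the factor $t_n$ from the pointwise bound $\la^{-1}(1-r_t(\la)^2)\ge c'\,t\,r_t(\la)^2$ together with the \emph{minimality} of $t_n$ (the violated discrepancy at $t<t_n$), and absorb the defect term by Cauchy--Schwarz using $\tau\eta>2$. This is a nice unification: it replaces four separate computations by one identity plus one filter-by-filter verification of an elementary scalar inequality, and it makes transparent where $\tau\eta>2$ enters (balancing the two $t_n$-weighted terms). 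The price is that the minimality of $t_n$ must be invoked explicitly (the paper gets $\|z_{n,k-1}\|>\eta\|y^\d-F(x_n)\|$ for free inside the telescoping sum), and the constants are looser.

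One sub-case does not close as you stated it: $t_n=1$ for the implicit iteration. There the minimality information is vacuous, and your fallback $\la^{-1}(1-r_1(\la)^2)\ge 1-r_1(\la)^2$ only yields $M\ge(1-\eta^2)\|v_n\|^2$ (or $M\ge\frac34\|v_n\|^2$ from the pointwise minimum of $(2+\la)/(1+\la)^2$ on $[0,1]$), after which your Cauchy--Schwarz absorption requires roughly $2\theta<\sqrt{1-\eta^2}$, i.e. $\tau\sqrt{1-\eta^2}>2$; this is not implied by $\tau>2$ and $\tau\eta>2$ (take $\eta=0.9$, $\tau=2.2$). The fix stays inside your framework: for this filter $\widetilde G_n=\widetilde R_n=S:=(I+A_nA_n^*)^{-1}$, so the identity reads $(Sv_n,v_n)+\|Sv_n\|^2-2(h_n,Sv_n)\ge 2\|Sv_n\|(\|Sv_n\|-\|h_n\|)$, and $\|Sv_n\|\ge\frac12\|v_n\|$ gives a decrease of at least $(\frac12-\theta)\|v_n\|^2>0$ since $\tau>2$ forces $\theta<\frac12$ --- which is exactly the paper's one-step estimate. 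With that repair (Landweber's $t_n=1$ case is already fine via $2-\la\ge 1$), your argument delivers (\ref{2.11}), then (\ref{4.30.1}) by telescoping, and $n_\d<\infty$ from $t_n\ge c_0$ and $\|y^\d-F(x_n)\|>\tau\d$, exactly as claimed.
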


\begin{proof}
We will prove this result for the four inexact Newton methods case by case.

(a) We first consider the inexact Newton method with inner scheme defined by Landweber iteration
in Hilber scales. We first show the monotonicity (\ref{2.11}).
We may assume $n_\d\ge 1$. Let $0\le n<n_\d$ and assume that $\|e_n\|_s\le \|e_0\|_s$.
By the definition of $n_\d$ we have $\|y^\d-F(x_n)\|>\tau \d$.
It follows from Lemma \ref{L2.1} that $t_n$ is a well-defined positive integer.
Let $u_{n,k}:=u_n(k)$ for each integer $k$. Then $u_{n,0}=0$ and
$$
u_{n, k}=u_{n,k-1}+ L^{-2s} T_n^* \left(y^\d-F(x_n)- T_n u_{n,k-1}\right)
$$
for $k=1, \cdots, t_n$. Recall that $x_{n+1}=x_n+u_{n, t_n}$. Therefore,
in order to show $\|e_{n+1}\|_s\le \|e_n\|_s$, it suffices to show
\begin{equation}\label{2.12}
\|e_n+u_{n, k}\|_s\le \|e_n+u_{n, k-1}\|_s, \qquad k=1, \cdots, t_n.
\end{equation}
We set $z_{n,k}=y^\d-F(x_n)-T_n u_{n, k}$. Then $u_{n,k}-u_{n, k-1}=L^{-2s} T_n^* z_{n, k-1}$ and thus
\begin{align*}
\|e_n+u_{n, k}&\|_s^2 -\|e_n+u_{n, k-1}\|_s^2\\
&=2(e_n+u_{n, k-1}, u_{n, k}-u_{n, k-1})_s+\|u_{n, k}-u_{n, k-1}\|_s^2\\
&=(u_{n, k}-u_{n, k-1}, u_{n, k}+u_{n, k-1}+2 e_n)_s\\
&=\left(z_{n, k-1}, T_n ( u_{n, k} + u_{n, k-1} + 2 e_n)\right).
\end{align*}
According to the definition of $z_{n,k}$ one can see
$$
T_n ( u_{n, k} + u_{n, k-1} + 2 e_n)=-z_{n,k} -z_{n, k-1} +2(y^\d-F(x_n)+ T_n e_n).
$$
Therefore
\begin{align*}
\|e_n+ &u_{n, k}\|_s^2 -\|e_n+u_{n, k-1}\|_s^2\\
&=-(z_{n, k-1}, z_{n,k})-\|z_{n, k-1}\|^2 + 2 (z_{n,k-1}, y^\d-F(x_n) +T_n e_n).
\end{align*}
Observing that (\ref{r1}) and $r_t(\la)=(1-\la)^{[t]}$ imply $z_{n,k}=(I-A_nA_n^*)^k (y^\d-F(x_n))$,
we have $(z_{n,k-1}, z_{n,k})\ge 0$. Hence
\begin{align*}
\|e_n+& u_{n, k}\|_s^2 -\|e_n+u_{n, k-1}\|_s^2\\
&\le -\|z_{n,k-1}\|\left(\|z_{n,k-1}\|- 2 \|y^\d-F(x_n) +T_n e_n\|\right).
\end{align*}
Since $\tau \eta>2$, we can pick $0<\eta_0<\eta/2$ with $\tau \eta_0>1$. By using
Assumption \ref{A1}, $\tau \d< \|y^\d-F(x_n)\|$ and $\|e_n\|_s\le \|e_0\|_s$,
we can derive as in the proof of Lemma \ref{L2.1} that if $K_0\|e_0\|_s^\beta$ is
sufficiently small then
\begin{align*}
\|y^\d-F(x_n) + T_n e_n\|\le  \eta_0 \|y^\d-F(x_n)\|.
\end{align*}
On the other hand, by the definition of $t_n$
we have $\|z_{n,k-1}\|>\eta \|y^\d-F(x_n)\|$. Therefore
\begin{align}\label{2.13}
\|e_n+u_{n, k}\|_s^2& -\|e_n+u_{n, k-1}\|_s^2\le -\varepsilon_0 \|y^\d-F(x_n)\|^2,
\end{align}
where $\varepsilon_0:=\eta(\eta-2\eta_0)>0$. This in particular implies (\ref{2.12})
and hence $\|e_{n+1}\|_s\le \|e_n\|_s$. An induction argument
then shows the monotonicity result (\ref{2.11}).

Moreover, it follows from (\ref{2.13}) that
\begin{align*}
\|e_{n+1}\|_s^2 -\|e_n\|_s^2 &=\sum_{k=1}^{t_n} \left(\|e_n+u_{n, k}\|_s^2-\|e_n+u_{n, k-1}\|_s^2\right)\\
&\le - \varepsilon_0 t_n \|y^\d-F(x_n)\|^2.
\end{align*}
Consequently
$$
\varepsilon_0 \sum_{n=0}^{n_\d-1} t_n \|y^\d-F(x_n)\|^2
\le \|e_0\|_s^2-\|e_{n_\d}\|_s^2 \le \|e_0\|_s^2 <\infty
$$
which shows (\ref{4.30.1}). Since $t_n\ge 1$ and $\|y^\d-F(x_n)\|>\tau\d$ for $0\le n<n_\d$,
one can see that $n_\d$ must be finite.

(b) For the inexact Newton method with inner scheme defined by the implicit iteration
in Hilbert scales, all $t_n$ must be positive integer and with the notation $u_{n,k}:=u_n(k)$
we have $u_{n,0}=0$ and
$$
u_{n, k}=u_{n,k-1}+(L^{2s} + T_n^* T_n)^{-1} T_n^* \left(y^\d-F(x_n) -T_n u_{n,k-1}\right).
$$
Let $z_{n,k}:=y^\d-F(x_n)-T_n u_{n, k}$. We have from (\ref{r1}) and $r_t(\la)=(1+\la)^{-[t]}$
that $z_{n,k}=(I+A_nA_n^*)^{-1} z_{n,k-1}$ and $u_{n, k}-u_{n,k-1}=L^{-2s} T_n^* z_{n,k}$. Thus
\begin{align*}
\|e_n+u_{n,k}&\|_s^2-\|e_n+u_{n,k-1}\|_s^2\\
&=(u_{n,k}-u_{n,k-1}, u_{n,k}+u_{n,k-1}+2 e_n)_s\\
&=(z_{n,k}, T_n(u_{n,k}+u_{n,k-1}+2 e_n))\\
&=(z_{n,k}, -z_{n,k}-z_{n,k-1} +2(y^\d-F(x_n) +T_n e_n)).
\end{align*}
Note that $(z_{n,k}, z_{n,k-1})\ge \|z_{n,k}\|^2$. We then obtain
\begin{align*}
\|e_n+u_{n,k}\|_s^2-\|e_n+u_{n,k-1}\|_s^2
&\le -2\|z_{n,k}\|\left(\|z_{n,k}\|-\|y^\d-F(x_n) +T_n e_n\|\right).
\end{align*}
By using $\|A_n\|\le 1$ and the definition of $t_n$, we have
$$
\|z_{n,k}\|\ge \frac{1}{2}\|z_{n,k-1}\| \ge \frac{1}{2}\eta \|y^\d-F(x_n)\|, \quad k=1,\cdots, t_n.
$$
Since $\tau \eta>2$, we can obtain
\begin{align*}
\|e_n+u_{n,k}\|_s^2-\|e_n+u_{n,k-1}\|_s^2
&\le -\frac{1}{2} \eta (\eta-2\eta_0) \|y^\d-F(x_n)\|^2.
\end{align*}
for $k=1, \cdots, t_n$ when $K_0\|e_0\|_s$ is sufficiently small, where $0<\eta_0<\eta/2$ is such that $\tau \eta_0>1$.
This together with an induction argument implies (\ref{4.30.1}) and (\ref{2.11}).

(c) For the inexact Newton method with inner scheme defined by the asymptotic regularization in
Hilbert scales, $u_n(t)$ is the solution of the initial value problem
\begin{align*}
\frac{d}{d t} u_n(t) &=L^{-2s} T_n^* \left(y^\d-F(x_n)-T_n u_n(t)\right), \quad t>0,\\
u_n(0)&=0.
\end{align*}
Therefore, with $z_n(t):= y^\d-F(x_n)-T_n u_n(t)$ we have
\begin{align*}
\frac{d}{d t} \|e_n+u_n(t)\|_s^2&=2 \left( \frac{d}{d t} u_n(t), e_n +u_n(t)\right)_s
=2\left(z_n(t), T_n(e_n+u_n(t))\right)\\
&=2(z_n(t), -z_n(t)+ y^\d-F(x_n)+T_n e_n)\\
&\le -2\|z_n(t)\| \left(\|z_n(t)\|-\|y^\d-F(x_n)+T_n e_n\|\right).
\end{align*}
According to the definition of $t_n$ we have $\|z_n(t_n)\|=\eta\|y^\d-F(x_n)\|$ and
$\|z_n(t)\|> \eta\|y^\d-F(x_n)\|$ for $0\le t\le t_n$. Since $\tau \eta>1$, we therefore obtain
$$
\frac{d}{d t} \|e_n +u_n(t)\|_s^2\le -2\eta(\eta-\eta_0) \|y^\d-F(x_n)\|^2, \quad 0<t\le t_n
$$
if $K_0\|e_0\|_s^\beta$ is sufficiently small, where $0<\eta_0<\eta$ is such that $\tau \eta_0>1$.
In view of $u_n(0)=0$ and $x_{n+1}=x_n+u_n(t_n)$, we obtain
$$
\|e_{n+1}\|_s^2-\|e_n\|_s^2 \le -2\eta(\eta-\eta_0) t_n\|y^\d-F(x_n)\|^2.
$$
This implies (\ref{4.30.1}) and (\ref{2.11}) immediately.

(d)
For the inexact Newton method with inner scheme defined by Tikhonov regularization,
we have
$$
u_n(t)=\left(t^{-1}  L^{2s} + T_n^* T_n\right)^{-1} T_n^* (y^\d-F(x_n)).
$$
We first observe that
\begin{align*}
\|e_{n+1}\|_s^2 -\|e_n\|_s^2
&\le 2 \|x_{n+1}-x_n\|_s^2 +2(x_{n+1}-x_n, e_n)_s\\
&=2(x_{n+1}-x_n, x_{n+1}-x_n +e_n)_s.
\end{align*}
Let $z_n=y^\d-F(x_n)-T_n (x_{n+1}-x_n)$. We have from (\ref{r1}) and $r_t(\la)=(1+t \la)^{-1}$ that
$u_n(t_n)= t_n L^{-2s} T_n^* z_n$ and hence $x_{n+1}-x_n=t_n L^{-2s} T_n^* z_n$. Therefore
\begin{align*}
\|e_{n+1}\|_s^2 -\|e_n\|_s^2 &\le 2 t_n \left(z_n, T_n (x_{n+1}-x_n +e_n) \right)\\
&=2 t_n \left(z_n, -z_n +(y^\d-F(x_n) +T_n e_n)\right)\\
&\le -2t_n \|z_n\|\left(\|z_n\|-\|y^\d-F(x_n)+T_n e_n\|\right).
\end{align*}
By the definition of $t_n$ we have $\|z_n\|= \eta \|y^\d-F(x_n)\|$.
Since $\tau \eta>1$, we can obtain
\begin{align*}
\|e_{n+1}\|_s^2 -\|e_n\|_s^2
&\le -2\eta(\eta-\eta_0) t_n \|y^\d-F(x_n)\|^2
\end{align*}
if $K_0\|e_0\|_s^\beta$ is sufficiently small, where $0<\eta_0<\eta$ is such that $\tau \eta_0>1$.
This implies (\ref{4.30.1}) and (\ref{2.11}). \hfill $\Box$
\end{proof}

\begin{remark} \label{R3.1}
The inequality (\ref{4.30.1}) will find its use in the proof of Lemma \ref{P1}. From (\ref{4.30.1}), $t_n\ge c_0>0$, and the fact
$\|y^\d-F(x_n)\|\ge \tau \d$ for $0\le n<n_\d$, it follows easily that
$n_\d=O(\d^{-2})$ which gives only a rough estimate on the number of outer iterations. However, we should point out
that the inexact Newton iterations in Hilbert scales in fact terminate after
$n_\d=O(1+|\log \d|)$ outer iterations. This can be confirmed by using the fact
\begin{align}\label{4.29.0}
\eta &\|y^\d-F(x_n)\| \ge \|y^\d-F(x_n)-T_n(x_{n+1}-x_n)\|, \quad 0\le n<n_\d
\end{align}
which follows from the definition of $t_n$ and $x_{n+1}=x_n+u_n(t_n)$. To see this, by using
(\ref{5.1.2}) in Lemma \ref{L2.0} we have
\begin{align*}
\|F(x_{n+1})-F(x_n)-T_n (x_{n+1}-x_n)\| \le \frac{\gamma^{s\beta+b-a}}{1+\beta}  K_0\|x_{n+1}-x_n\|_s^\beta\|x_{n+1}-x_n\|_{-a}.
\end{align*}
Since (\ref{2.11}) implies $\|x_{n+1}-x_n\|_s\le \|e_{n+1}\|_s+\|e_n\|_s\le 2\|e_0\|_s$,
from Assumption \ref{A1} (a) we have with $C:=2^\beta\gamma^{s\beta+b-a}/[(1+\beta)m]$ that
\begin{align*}
\|F(x_{n+1})-F(x_n)-T_n (x_{n+1}-x_n)\| & \le C K_0\|e_0\|_s^\beta \|T_n (x_{n+1}-x_n)\|.
\end{align*}
Therefore, if $K_0\|e_0\|_s^\beta$ is sufficiently small, then there holds
$\|T_n (x_{n+1}-x_n)\|\le 2 \|F(x_{n+1})-F(x_n)\|$ and consequently
\begin{align}\label{4.29}
\|F(x_{n+1})-F(x_n)&-T_n(x_{n+1}-x_n)\| \le 2C K_0\|e_0\|_s^\beta \|F(x_{n+1})-F(x_n)\|.
\end{align}
Combining this with (\ref{4.29.0}) yields
$$
\eta \|y^\d-F(x_n)\|\ge \|y^\d-F(x_{n+1})\| - 2 C K_0\|e_0\|_s^\beta \|F(x_{n+1})-F(x_n)\|.
$$
Considering $\eta<1$, this in particular implies that if $K_0\|e_0\|_s^\beta$ is sufficiently small then
$$
\frac{\|y^\d-F(x_{n+1})\|}{\|y^\d-F(x_n)\|}
\le \frac {\eta+ 2 C K_0\|e_0\|_s^\beta}{1-2 C K_0\|e_0\|_s^\beta}
\le \frac{1+\eta}{2}<1.
$$
Therefore for all $n=0, \cdots, n_\d$ there holds
$$
\|y^\d-F(x_n)\|\le \left(\frac{1+\eta}{2}\right)^n \|y^\d-F(x_0)\|.
$$
By taking $n=n_\d-1$ and using $\|y^\d-F(x_{n_\d-1})\|\ge \tau \d$ we obtain
$\tau \d\le \left(\frac{1+\eta}{2}\right)^{n_\d-1} \|y^\d-F(x_0)\|$ which shows that $n_\d=O(1+|\log\d|)$.
\end{remark}

\section{\bf Proof of Theorem \ref{T2}}
\setcounter{equation}{0}

In this section we will show the order optimality of the four inexact Newton method in Hilbert scales
stated in Theorem \ref{T2}. For simplicity of further exposition, we will always use $C$ to denote a generic
constant independent of $\d$ and $n$, we will also use the
convention $\Phi\lesssim \Psi$ to mean that $\Phi\le C \Psi$ for
some generic constant $C$ when the explicit expression of $C$ is not important.
Furthermore, we will use $\Phi\sim \Psi$ to mean that $\Phi\lesssim \Psi$ and $\Psi\lesssim \Phi$.

\begin{lemma}\label{L2.3}
Under the same conditions in Lemma \ref{L2.2}, there holds
$$
\|y^\d-F(x_n)\|\lesssim \|y^\d-F(x_{n+1})\|, \qquad n=0, \cdots, n_\d-1.
$$
\end{lemma}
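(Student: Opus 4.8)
The plan is to obtain the lower bound on $\|y^\d-F(x_{n+1})\|$ from a reverse triangle inequality applied to the natural splitting of the new residual into the inner residual and a nonlinearity (Taylor) remainder. Writing $w_n:=y^\d-F(x_n)$ and using $x_{n+1}=x_n+u_n(t_n)$, I would set $R_n:=F(x_{n+1})-F(x_n)-T_n(x_{n+1}-x_n)$ and decompose
$$
y^\d-F(x_{n+1})=\left(y^\d-F(x_n)-T_n u_n(t_n)\right)-R_n ,
$$
so that $\|y^\d-F(x_{n+1})\|\ge \|y^\d-F(x_n)-T_n u_n(t_n)\|-\|R_n\|$. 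This reduces the claim to two facts: a lower bound on the inner residual of the form $\|y^\d-F(x_n)-T_n u_n(t_n)\|\ge c_1\|w_n\|$ with $c_1>0$, and the smallness of $\|R_n\|$ relative to $\|w_n\|$. Throughout I may use that Lemma \ref{L2.2} gives $\|e_{n+1}\|_s\le\|e_0\|_s$, hence $x_{n+1}\in B_\rho(x^\dag)$, for $0\le n<n_\d$.

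For the lower bound on the inner residual I would argue case by case, the decisive ingredient being that Assumption \ref{A1}(b) forces $\|A_n\|=\|F'(x_n)L^{-s}\|\le\Theta<1$, so that $A_nA_n^*$ has spectrum contained in $[0,\Theta^2]$ with $\Theta^2<1$. For Tikhonov and asymptotic regularization the stopping rule is attained with equality, so $\|y^\d-F(x_n)-T_n u_n(t_n)\|=\eta\|w_n\|$ directly, as already recorded in the proofs of Lemma \ref{L2.2}(c),(d). For the Landweber and implicit inner iterations $t_n\ge 1$ is an integer, and the minimality of $t_n$ gives $\|z_{n,t_n-1}\|>\eta\|w_n\|$ for the penultimate inner residual. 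Since $z_{n,t_n}=(I-A_nA_n^*)z_{n,t_n-1}$ in the Landweber case and $z_{n,t_n}=(I+A_nA_n^*)^{-1}z_{n,t_n-1}$ in the implicit case, and the spectra of $I-A_nA_n^*$ and of $(I+A_nA_n^*)^{-1}$ are bounded below by $1-\Theta^2>0$ and by $(1+\Theta^2)^{-1}>0$ respectively, I obtain $\|z_{n,t_n}\|\ge(1-\Theta^2)\eta\|w_n\|$ and $\|z_{n,t_n}\|\ge(1+\Theta^2)^{-1}\eta\|w_n\|$. In every case $\|y^\d-F(x_n)-T_n u_n(t_n)\|\ge c_1\|w_n\|$ with a constant $c_1>0$ depending only on $\eta$ and $\Theta$.

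For the remainder I would invoke the estimate (\ref{4.29}) from Remark \ref{R3.1}, valid for $0\le n<n_\d$ once $K_0\|e_0\|_s^\beta$ is small, namely $\|R_n\|\le 2CK_0\|e_0\|_s^\beta\|F(x_{n+1})-F(x_n)\|$. Since $T_n u_n(t_n)=w_n-(y^\d-F(x_n)-T_n u_n(t_n))$ has norm at most $(1+\eta)\|w_n\|$ by the stopping rule (\ref{4.2}), bounding $\|F(x_{n+1})-F(x_n)\|\le\|T_n u_n(t_n)\|+\|R_n\|$ and absorbing the $\|R_n\|$ term on the left yields $\|R_n\|\le C'K_0\|e_0\|_s^\beta\|w_n\|$ for a constant $C'$. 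Combining this with the inner-residual bound gives
$$
\|y^\d-F(x_{n+1})\|\ge\left(c_1-C'K_0\|e_0\|_s^\beta\right)\|w_n\| ,
$$
so that choosing $K_0\|e_0\|_s^\beta$ small enough that $C'K_0\|e_0\|_s^\beta\le c_1/2$ produces $\|y^\d-F(x_n)\|\le(2/c_1)\|y^\d-F(x_{n+1})\|$, which is the asserted estimate.

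I expect the genuine obstacle to be the lower bound on the inner residual for the two iterative schemes: without the scaling $\|A_n\|\le\Theta<1$, one Landweber step can annihilate the residual (components of $w_n$ lying where $A_nA_n^*$ has eigenvalue close to $1$), so the ratio $\|z_{n,t_n}\|/\|z_{n,t_n-1}\|$ would not be bounded away from zero and the claim would fail. The whole argument therefore rests on exploiting Assumption \ref{A1}(b) to keep the extreme spectral values of $A_nA_n^*$ strictly inside $[0,1)$; once this is in place, the remainder bound and the final combination are routine consequences of the machinery already assembled in Section 3.
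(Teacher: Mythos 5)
Your proposal is correct and follows essentially the same route as the paper: the same case-by-case lower bound $\|y^\d-F(x_n)-T_nu_n(t_n)\|\ge c_1\|y^\d-F(x_n)\|$ (exploiting $\|A_n\|\le\Theta<1$ for Landweber and the spectral bound for the implicit iteration), combined with the Taylor-remainder estimate (\ref{4.29}) and an absorption argument for small $K_0\|e_0\|_s^\beta$. The only cosmetic difference is that the paper bounds $\|F(x_{n+1})-F(x_n)\|$ by $\|y^\d-F(x_{n+1})\|+\|y^\d-F(x_n)\|$ and absorbs both residual terms, whereas you bound it via $\|T_nu_n(t_n)\|\le(1+\eta)\|y^\d-F(x_n)\|$; both are valid.
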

\begin{proof}
We first claim that there is a constant $c_1>0$ such that
\begin{align}\label{5.7.6}
c_1\|y^\d-F(x_n)\| \le \|y^\d-F(x_n)- T_n (x_{n+1}-x_n)\|.
\end{align}
This is clear from the definition of $t_n$ when the inner scheme is defined by
Tikhonov regularization or the asymptotic regularization. When the inner scheme
is defined by Landweber iteration, we have $r_t(\la)=(1-\la)^{[t]}$.
According to the definition of $t_n$ and (\ref{r1}), we have
\begin{align*}
\eta \|y^\d-F(x_n)\| &\le \|y^\d-F(x_n)-T_n u_n(t_n-1)\|\\
&=\|(I-A_n A_n^*)^{t_n-1} (y^\d-F(x_n))\|.
\end{align*}
Since $\|A_n\|\le \Theta<1$, we have $\|(I-A_n A_n^*)^{-1}\|\le (1-\Theta^2)^{-1}$. Therefore, using
(\ref{r1}) again it follows
\begin{align*}
(1-\Theta^2) \eta\|y^\d-F(x_n)\|&\le  \|(I- A_nA_n^*)^{t_n} (y^\d-F(x_n))\| \nonumber\\
&= \|y^\d-F(x_n)- T_n (x_{n+1}-x_n)\|
\end{align*}
which shows (\ref{5.7.6}) with $c_1=(1-\Theta^2)\eta$.  When the inner scheme is defined by the
implicit iteration, we have $r_t(\la)=(1+\la)^{-[t]}$.  Thus it follows from (\ref{r1}) and $\|A_n\|\le 1$ that
\begin{align*}
\eta \|y^\d-F(x_n)\| &\le \|(I+A_nA_n^*)^{-t_n+1} (y^\d-F(x_n)) \| \\
&\le 2 \|(I+A_nA_n^*)^{-t_n} (y^\d-F(x_n)) \| \\
&= 2 \|y^\d-F(x_n) -T_n(x_{n+1}-x_n)\|
\end{align*}
which shows (\ref{5.7.6}) with $c_1=\eta/2$.

The combination of (\ref{5.7.6}) and (\ref{4.29}) gives
\begin{align*}
c_1&\|y^\d-F(x_n)\|\\
&\le  \|y^\d-F(x_{n+1})\| + C K_0\|e_0\|_s^\beta \|F(x_{n+1})-F(x_n)\|\\
&\le  \|y^\d-F(x_{n+1})\| + C K_0\|e_0\|_s^\beta \left(\|y^\d-F(x_{n+1})\|+\|y^\d-F(x_n)\|\right).
\end{align*}
This shows the result if $K_0\|e_0\|_s^\beta$ is sufficiently small.
\hfill $\Box$
\end{proof}

For the spectral filter functions defined by (\ref{5.7.2}), we have shown in \cite{JT2011} that for any
sequence of positive numbers $\{t_n\}$ there hold
\begin{align}
0\le \la^\nu \prod_{k=j}^{n-1} r_{t_k}(\la)&\le (s_n-s_j)^{-\nu},  \label{g1}\\
0\le \la^\nu g_{t_j}(\la) \prod_{k=j+1}^{n-1} r_{t_k}(\la)&\le
t_j (s_n-s_j)^{-\nu} \label{g2}
\end{align}
and
\begin{equation}\label{g3}
0\le \la^\nu \sum_{i=0}^{n-1} g_{t_i}(\la) \prod_{k=i+1}^{n-1} r_{t_k}(\la)\le s_n^{1-\nu}
\end{equation}
for $0\le \nu\le 1$,  $0\le \la \le 1$ and $j=0, 1, \cdots, n-1$, where $\{s_n\}$ is defined by
\begin{equation}\label{s1}
s_0=0 \qquad \mbox{and} \qquad s_n=\sum_{j=0}^{n-1} t_j  \quad \mbox{for } n=1,2, \cdots.
\end{equation}
Moreover, we have the following crucial estimate.

\begin{lemma}\label{L10}
Let $F$ satisfy Assumption \ref{A1}, let $\{g_t\}$ be defined by (\ref{5.7.2}) and $r_t(\la)=1-\la g_t(\la)$,
and let $\{t_n\}$ be a sequence of positive numbers with $\{s_n\}$
defined by (\ref{s1}). Let $A=F'(x^\dag) L^{-s}$ and for any $x\in B_\rho(x^\dag)$ let $A_x=F'(x) L^{-s}$.
Then for $-\frac{b+s}{2(a+s)} \le \nu\le 1/2$ there holds
\begin{align*}
&\left\| (A^*A)^\nu \prod_{k=j+1}^{n-1} r_{t_k}(A^*A) \left[ g_{t_j}(A^*A)A^*
-g_{t_j}(A_x^*A_x)A_x^*\right] \right\| \nonumber\\
&\qquad\qquad \qquad\qquad\qquad\qquad
\lesssim t_j (s_n-s_j)^{-\nu-\frac{b+s}{2(a+s)}} K_0\|x-x^\dag\|^\beta
\end{align*}
for $j=0, 1, \cdots, n-1$.
\end{lemma}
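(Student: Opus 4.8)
The plan is to reduce the whole quantity to two effects: the perturbation of the adjoint, $A^*-A_x^*$, and the perturbation of the filter $g_{t_j}$ itself. Throughout I would first record Assumption \ref{A1}(c) in operator form. Since $\|B\|_{\X_{-b}\to\Y}=\|BL^b\|_{\X\to\Y}$, part (c) reads $\|(F'(x^\dag)-F'(x))L^b\|_{\X\to\Y}\le K_0\|x-x^\dag\|^\beta$, and taking adjoints gives the two factorizations $A^*-A_x^*=L^{-(s+b)}D$ and $A-A_x=\tilde D L^{-(s+b)}$ with $\|D\|,\|\tilde D\|\le K_0\|x-x^\dag\|^\beta$. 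Because $\|A\|,\|A_x\|\le\Theta<1$, the spectra of $A^*A$, $A_x^*A_x$, $AA^*$, $A_xA_x^*$ lie in $[0,1]$, so the scalar estimates (\ref{g1})--(\ref{g3}) apply, and I would freely use the commutation identity $f(A^*A)A^*=A^*f(AA^*)$. The starting point is the split
\[
g_{t_j}(A^*A)A^*-g_{t_j}(A_x^*A_x)A_x^* = g_{t_j}(A^*A)(A^*-A_x^*)+\left[g_{t_j}(A^*A)-g_{t_j}(A_x^*A_x)\right]A_x^*.
\]

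For the first (adjoint-perturbation) term, set $\Phi(\la):=\la^\nu g_{t_j}(\la)\prod_{k=j+1}^{n-1}r_{t_k}(\la)$, so that after inserting the prefactor $(A^*A)^\nu\prod r_{t_k}(A^*A)$ the contribution is $\Phi(A^*A)L^{-(s+b)}D$. Passing to the adjoint and using the norm equivalence (\ref{2.3}) with parameter $\frac{s+b}{a+s}\in[0,1]$ (where $b\le a$ is used), the factor $L^{-(s+b)}$ is absorbed into a power $(A^*A)^{\frac{b+s}{2(a+s)}}$, leaving the symbol $\la^{\nu+\frac{b+s}{2(a+s)}}g_{t_j}(\la)\prod_{k=j+1}^{n-1}r_{t_k}(\la)$. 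Bound (\ref{g2}), applied with exponent $\nu+\frac{b+s}{2(a+s)}$, then produces $t_j(s_n-s_j)^{-\nu-\frac{b+s}{2(a+s)}}$, and $\|D\|\le K_0\|x-x^\dag\|^\beta$ supplies the rest. This is exactly the asserted bound, and the admissibility condition $0\le\nu+\frac{b+s}{2(a+s)}\le1$ for (\ref{g2}) is precisely the stated range $-\frac{b+s}{2(a+s)}\le\nu\le\frac12$; this term is what pins down the hypotheses.

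The main obstacle is the filter-difference term, where $\|A_x^*\|\le\Theta<1$ is harmless but $g_{t_j}$ is evaluated at two noncommuting operators. I would represent each filter so as to expose the commutator $A^*A-A_x^*A_x=A^*(A-A_x)+(A^*-A_x^*)A_x$, which again carries the smoothing $L^{-(s+b)}$ together with a factor $K_0\|x-x^\dag\|^\beta$. Concretely: for Landweber and the implicit iteration $g_{t_j}$ is a finite sum, so $g_{t_j}(A^*A)-g_{t_j}(A_x^*A_x)$ telescopes term by term; for the asymptotic method $g_{t_j}(\la)=\int_0^{t_j}e^{-\sigma\la}\,d\sigma$ and one uses the identity $e^{-\sigma A^*A}-e^{-\sigma A_x^*A_x}=\int_0^\sigma e^{-\tau A^*A}(A_x^*A_x-A^*A)e^{-(\sigma-\tau)A_x^*A_x}\,d\tau$; for Tikhonov $g_{t_j}$ is a resolvent and the second resolvent identity applies. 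In every case the difference becomes a sum of terms in which $A^*A-A_x^*A_x$ is sandwiched between spectral factors, the factors on the $A_x$-side being bounded by constants while the factors on the $A^*A$-side combine with the prefactor $(A^*A)^\nu\prod r_{t_k}(A^*A)$ and with the power $(A^*A)^{\frac{b+s}{2(a+s)}}$ extracted from $L^{-(s+b)}$ via (\ref{2.3}).

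The delicate point, and the reason the refined estimates (\ref{g1})--(\ref{g3}) are indispensable, is the bookkeeping of the growth in $t_j$: bounding $g_{t_j}$ crudely by its sup-norm, which is of size $t_j$, together with the $t_j$-length of the telescoping or integration, would yield a spurious factor $t_j^2$. To recover exactly one power of $t_j$ and the decay $(s_n-s_j)^{-\nu-\frac{b+s}{2(a+s)}}$, I would route all spectral weight onto the $A^*A$-side, shuttling between $\X$ and $\Y$ through $f(A^*A)A^*=A^*f(AA^*)$ and the two-sided equivalence (\ref{2.3}) for both $A$ and $A_x$, and then invoke (\ref{g2}) for the single $g_{t_j}$-factor (which supplies the lone $t_j$), (\ref{g3}) for the summed or integrated contribution, and (\ref{g1}) for the residual products. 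Checking that this routing can be carried out uniformly for all four filters, with the smoothing always landing against a residual product so that the correct power of $s_n-s_j$ emerges, is the technical heart; once it is in place the remaining algebra is routine.
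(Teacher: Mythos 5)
First, a caveat: the paper does not actually prove Lemma \ref{L10} --- its ``proof'' is a one-line citation of \cite[Lemma 2]{JT2011} --- so your attempt has to be measured against that reference rather than against anything in this text. Your treatment of the adjoint-perturbation term $g_{t_j}(A^*A)(A^*-A_x^*)$ is complete and correct: writing $A^*-A_x^*=L^{-(s+b)}D$ with $\|D\|\le K_0\|x-x^\dag\|^\beta$, absorbing $L^{-(s+b)}$ into a factor $(A^*A)^{\frac{b+s}{2(a+s)}}$ via (\ref{2.3}) (using $0\le b\le a$), and applying (\ref{g2}) with exponent $\nu+\frac{b+s}{2(a+s)}\in[0,1]$ is exactly the right argument, and you are right that this term is what fixes the admissible range of $\nu$.

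The gap is in the filter-difference term, and it is not merely deferred bookkeeping. After telescoping (in either order, and likewise after summation by parts) one is led to sums of the schematic form $\sum_{m=0}^{t_j-2}\bigl\|(AA^*)^{\nu+\frac12}\,g_{t_j-1-m}(AA^*)\prod_k r_{t_k}(AA^*)\bigr\|\cdot\bigl\|(A_xA_x^*)^{\frac{b+s}{2(a+s)}+\frac12}(I-A_xA_x^*)^m\bigr\|$, together with a companion sum in which the two smoothing exponents are exchanged. Because the two non-commuting operator families are separated by the bounded factor $D$, each factor can only be bounded by its own supremum via (\ref{g1})--(\ref{g3}); this yields $\sum_m t_j(\cdot)^{-\nu-\frac12}(1+m)^{-\frac{b+s}{2(a+s)}-\frac12}$, which in the endpoint case $\nu+\frac{b+s}{2(a+s)}=1$ exceeds the asserted bound by a factor $\log t_j$ (and the alternative routing produces the same logarithm from the other side). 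That endpoint is not exotic: it is precisely $\nu=\tfrac12$, $b=a$, which is the configuration used to obtain (\ref{333}) and is the case $a=b=1$ underlying Theorem \ref{T1}. Only for Tikhonov does your plan close cleanly, because the second resolvent identity inserts the commutator exactly once and produces an explicit factor $t_j$ in front of pure residual functions; for Landweber, the implicit iteration and the asymptotic regularization the difference of filters is a sum or integral of order $t_j$ commutator insertions, and recovering a single power of $t_j$ with the full decay $(s_n-s_j)^{-\nu-\frac{b+s}{2(a+s)}}$ requires exploiting cancellation across the insertions (for instance by comparing $(I-A_x^*A_x)^m$ with $(I-A^*A)^m$ and bootstrapping in the small quantity $K_0\|x-x^\dag\|^\beta$, or by filter-specific exact identities), none of which appears in your sketch. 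So the ``technical heart'' you set aside is in fact the entire content of the lemma, and the specific routing you describe does not reach the stated estimate.
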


\begin{proof}
We refer to \cite[Lemma 2]{JT2011} in which similar estimates have been derived for a general class of
spectral filter functions. \hfill $\Box$
\end{proof}

We also need the following estimate concerning the sums of suitable types
which will occur in the convergence analysis.

\begin{lemma}\label{L2}
Let $\{t_n\}$ be a sequence of numbers satisfying $t_n\ge c_2>0$, and let
$s_n$ be defined by (\ref{s1}). Let $p\ge 0$ and $q\ge0$ be two numbers. Then
$$
\sum_{j=0}^{n-1}  t_j (s_n-s_j)^{-p} s_{j+1}^{-q}\le C_3
s_n^{1-p-q} \left\{\begin{array}{lll}
1, & \max\{p,q\}<1,\\
\log (1+s_n), & \max\{p, q\}=1,\\
s_n^{\max\{p, q\}-1}, & \max\{p, q\}>1,
\end{array}\right.
$$
where $C_3$ is a constant depending only on $p$, $q$ and $c_2$.
\end{lemma}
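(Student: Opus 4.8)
This is Lemma L2, a purely analytic estimate about sums of the form

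$$\sum_{j=0}^{n-1} t_j (s_n - s_j)^{-p} s_{j+1}^{-q}$$

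where the $s_j$ are partial sums of the positive sequence $\{t_j\}$ with $t_n \ge c_2 > 0$. The target bound is $C_3 s_n^{1-p-q}$ up to logarithmic or power corrections depending on $\max\{p,q\}$.

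Let me think about what this sum really is. We have $s_0 = 0 < s_1 < s_2 < \cdots < s_n$, with gaps $s_{j+1} - s_j = t_j \ge c_2$. The factor $t_j$ is the "width" of the $j$-th step, and $(s_n - s_j)^{-p} s_{j+1}^{-q}$ is a value being weighted by that width. This is crying out to be compared to an integral: $\sum_j t_j f(s_j) \approx \int_0^{s_n} f(\sigma)\, d\sigma$.

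**The integral I want to compare to.**

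The natural continuous analogue is

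$$\int_0^{s_n} (s_n - \sigma)^{-p} \sigma^{-q}\, d\sigma.$$

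By the substitution $\sigma = s_n \theta$, this equals $s_n^{1-p-q} \int_0^1 (1-\theta)^{-p} \theta^{-q}\, d\theta = s_n^{1-p-q} B(1-q, 1-p)$, a Beta function. This converges precisely when $p < 1$ and $q < 1$, giving the clean bound $s_n^{1-p-q}$ in the first case. When $p = 1$ or $q = 1$ the integral diverges logarithmically, and when $\max\{p,q\} > 1$ it diverges like a power, and these divergences should match the stated $\log(1+s_n)$ and $s_n^{\max\{p,q\}-1}$ corrections after I account for the cutoffs. So the structure of the answer is fully explained by the Beta integral and its divergence behavior.

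So my overall plan:

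**Step 1 (the plan):** Prove the integral version of the inequality,

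$$\int_{c_2}^{s_n} (s_n - \sigma)^{-p} \sigma^{-q}\, d\sigma \lesssim s_n^{1-p-q} \cdot \{\text{the three cases}\},$$

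where the lower cutoff $c_2$ handles the $\sigma^{-q}$ singularity at the origin when $q \ge 1$ (the sum never probes $\sigma$ below $c_2$ because $s_1 = t_0 \ge c_2$). I would split $[c_2, s_n]$ at the midpoint $s_n/2$: on the left half $(s_n-\sigma)^{-p} \sim s_n^{-p}$ is bounded and I integrate $\sigma^{-q}$, whose behavior against the cutoff $c_2$ produces the $q$-dependent cases; on the right half $\sigma^{-q} \sim s_n^{-q}$ is bounded and I integrate $(s_n-\sigma)^{-p}$, producing the $p$-dependent cases. Combining the two halves gives the stated trichotomy governed by $\max\{p,q\}$.

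**Step 2 (the sum-to-integral comparison):** I would dominate the sum by the integral. The cleanest route is to compare each term $t_j (s_n - s_j)^{-p} s_{j+1}^{-q}$ to $\int_{s_j}^{s_{j+1}} (s_n - \sigma)^{-p} \sigma^{-q}\, d\sigma$ over the interval of width $t_j = s_{j+1} - s_j$. On $[s_j, s_{j+1}]$ we have $\sigma \ge s_j$, so $\sigma^{-q} \le s_j^{-q}$ — but I actually want to compare to $s_{j+1}^{-q}$, which is the smaller value, so I need $\sigma^{-q} \ge s_{j+1}^{-q}$, which holds since $\sigma \le s_{j+1}$. Similarly $(s_n-\sigma)^{-p} \ge (s_n - s_j)^{-p}$ since $s_n - \sigma \le s_n - s_j$ on that interval. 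Hence $t_j (s_n-s_j)^{-p} s_{j+1}^{-q} \le \int_{s_j}^{s_{j+1}} (s_n-\sigma)^{-p}\sigma^{-q}\,d\sigma$, and summing over $j$ telescopes the intervals to $\int_{s_0}^{s_n} = \int_0^{s_n}$.

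**The main obstacle.**

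The comparison in Step 2 is almost clean but has two edge cases that are the real work. First, the last term $j = n-1$ has $(s_n - s_{n-1})^{-p} = t_{n-1}^{-p}$, and on the interval $[s_{n-1}, s_n]$ the integrand $(s_n-\sigma)^{-p}$ blows up at the right endpoint when $p > 0$; I must check that the inequality $t_{n-1}(s_n - s_{n-1})^{-p} s_n^{-q} \le \int_{s_{n-1}}^{s_n}(s_n-\sigma)^{-p}\sigma^{-q}\,d\sigma$ still holds, which it does for $p < 1$ (the integral dominates because $(s_n-\sigma)^{-p} \ge (s_n - s_{n-1})^{-p}$ pointwise), but for $p \ge 1$ the integral diverges and I must isolate this term and bound it directly using $t_{n-1} \ge c_2$, getting a contribution $\lesssim t_{n-1}^{1-p} s_n^{-q} \le c_2^{1-p} s_n^{-q}$. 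Second, the first interval $[0, s_1]$ contains the origin where $\sigma^{-q}$ diverges if $q > 0$; here I use $s_1 = t_0 \ge c_2$, so the term is $t_0 (s_n)^{-p} s_1^{-q}$ with $s_1 \ge c_2$ bounded below, and I compare instead against $\int_{c_2}^{s_1}$ or bound it separately. These two boundary terms, $j=0$ and $j=n-1$, are where the constant $C_3$ picks up its dependence on $c_2$, and verifying they fit under the stated right-hand side in each of the three regimes is the part requiring care rather than the bulk estimate.

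**Tracking the constant.** Throughout I keep $C_3$ depending only on $p$, $q$, and $c_2$ — it enters through the Beta-function value in the convergent case and through the cutoff $c_2$ in the boundary terms. I would finish by noting $\log(1+s_n)$ rather than $\log s_n$ is the correct form since it must remain valid for small $s_n$ (when $n$ is small), absorbing the case $s_n$ close to $c_2$ into the constant.
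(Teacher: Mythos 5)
Your proposal is correct: the term-by-term domination $t_j(s_n-s_j)^{-p}s_{j+1}^{-q}\le\int_{s_j}^{s_{j+1}}(s_n-\sigma)^{-p}\sigma^{-q}\,d\sigma$ is valid by the monotonicity you cite, the split of the resulting integral at $s_n/2$ produces exactly the three regimes, and you correctly flag the only delicate points, namely the boundary terms $j=0$ (when $q\ge 1$) and $j=n-1$ (when $p\ge 1$), which must be bounded directly via $t_j\ge c_2$. The paper itself gives no proof but defers to \cite{H2010} and \cite{JT2011}, whose arguments follow the same integral-comparison-and-splitting strategy, so your route is essentially the standard one.
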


\begin{proof}
This is essentially contained in \cite[Lemma 4.3]{H2010} and its proof. A
simplified proof can be found in \cite[Lemma 3]{JT2011}. \hfill $\Box$
\end{proof}

Now we are ready to give the crucial estimates on $\|e_n\|_\mu$ and
$\|T e_n\|$ for $0\le n<n_\d$. We will exploit the ideas developed in
\cite{H2010,Jin2011,JT2011}.

\begin{lemma}\label{P1}
Let $F$ satisfy Assumption \ref{A1} with $s\ge (a-b)/\beta$, let $\tau>2$ and $0< \eta<1$ be such that
$\tau \eta>2$, let $x_0\in \D(F)$ satisfy $\gamma^s \|e_0\|_s\le \rho$.
If $e_0 \in \X_\mu$ for some $s<\mu \le b+2s$ and if
$K_0\|e_0\|_\mu^\beta$ is sufficiently small, then there exists a constant $C_*>0$ such that
\begin{align*}
\|e_n\|_\mu  \le C_* \|e_0\|_\mu  \quad \mbox{and} \quad
\|T e_n\| \le C_* \|e_0\|_\mu (1+s_n)^{-\frac{a+\mu}{2(a+s)}}
\end{align*}
for all $n=0, \cdots, n_\d-1$.
\end{lemma}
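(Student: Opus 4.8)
The plan is to pass to the Hilbert-scale coordinates in which the spectral estimates (\ref{g1})--(\ref{g3}) and Lemma \ref{L10} are phrased, namely those associated with $A=TL^{-s}$, and to prove the two bounds \emph{simultaneously} by induction on $n$, using the smallness of $K_0\|e_0\|_\mu^\beta$ to absorb the feedback generated by the nonlinearity and by the mismatch between $A_n$ and $A$. First I would set $\epsilon_n:=L^s e_n$, so that $\|\epsilon_n\|=\|e_n\|_s$, $\|A\epsilon_n\|=\|Te_n\|$ and $\|e_n\|_\mu=\|L^{\mu-s}\epsilon_n\|$. Writing $\theta:=(\mu-s)/[2(a+s)]$, the admissible range $s<\mu\le b+2s$ gives $0<\theta\le 1/2$, and (\ref{2.3}) recasts the source condition $e_0\in\X_\mu$ as $\epsilon_0=(A^*A)^\theta v_0$ with $\|v_0\|\sim\|e_0\|_\mu$; the same equivalence yields $\|(A^*A)^{-\theta}h\|\sim\|L^{\mu-s}h\|$ and $\|(A^*A)^{1/2}h\|=\|Ah\|$, so the two quantities to be estimated become $\|(A^*A)^{-\theta}\epsilon_n\|$ and $\|(A^*A)^{1/2}\epsilon_n\|$. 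From (\ref{5.7.1}) and (\ref{r1}) the outer iteration reads
$$\epsilon_{n+1}=r_{t_n}(A_n^*A_n)\epsilon_n+g_{t_n}(A_n^*A_n)A_n^*\xi_n,\qquad \xi_n:=(y^\d-y)+\omega_n,$$
where $\omega_n:=F(x^\dag)-F(x_n)+T_n e_n$ is the linearization residual, which by (\ref{5.1.2}) of Lemma \ref{L2.0}, the monotonicity $\|e_n\|_s\le\|e_0\|_s$ from Lemma \ref{L2.2}, and Assumption \ref{A1}(a) satisfies $\|\omega_n\|\lesssim K_0\|e_0\|_s^\beta\|Te_n\|$.

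Second I would unroll this recursion and, by replacing each $A_j^*A_j,A_j^*$ by $A^*A,A^*$ one step at a time, arrive at the frozen-operator representation
$$\epsilon_n=\Big(\prod_{k=0}^{n-1}r_{t_k}(A^*A)\Big)\epsilon_0+\sum_{j=0}^{n-1}\Big(\prod_{k=j+1}^{n-1}r_{t_k}(A^*A)\Big)g_{t_j}(A^*A)A^*\xi_j+R_n,$$
where $R_n$ collects the differences produced by the freezing, each arranged to match exactly the operator bounded in Lemma \ref{L10}. The leading term is controlled by (\ref{g1}): applying $(A^*A)^{1/2}$ and inserting $\epsilon_0=(A^*A)^\theta v_0$ gives a factor $s_n^{-\sigma}$ with $\sigma=1/2+\theta=(a+\mu)/[2(a+s)]$, the exact rate claimed, while applying $(A^*A)^{-\theta}$ collapses the leading term to $\|\prod_k r_{t_k}(A^*A)\,v_0\|\le\|v_0\|$ since $\|r_t\|_\infty\le 1$ on $[0,1]$. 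Splitting $\xi_j=(y^\d-y)+\omega_j$, the noise part is a single fixed vector summed over all steps and is estimated by writing $A^*=(A^*A)^{1/2}V^*$ and invoking (\ref{g3}) (with exponent $1$ for the $\|Te_n\|$ bound, giving $\lesssim\d$, and with exponent $1/2-\theta$ for the $\X_\mu$ bound, giving $\lesssim\d\,s_n^{\sigma}$); these $\d$-contributions are absorbed for $n<n_\d$ by tying $\d$ to the iterate through $\|y^\d-F(x_n)\|>\tau\d$ and the comparability of consecutive residuals in Lemma \ref{L2.3}. The nonlinearity part is handled term by term via (\ref{g2}) after inserting $\|\omega_j\|\lesssim K_0\|e_0\|_s^\beta\|Te_j\|$, and the remainder $R_n$ is bounded summand by summand by Lemma \ref{L10}; in both cases each term carries a decay $(s_n-s_j)^{-\nu-(b+s)/[2(a+s)]}$ that is summable through Lemma \ref{L2}, the relevant exponents being kept in the convergent regime by the constraint $s\ge(a-b)/\beta$ and the value of $\theta$.

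Third I would close the induction. Assuming the two claimed bounds hold for all indices below $n$, I substitute them into the noise, nonlinearity, and remainder contributions; by construction every one of these appears multiplied by an overall factor proportional to $K_0\|e_0\|_\mu^\beta$ (through $\|\omega_j\|$ and through $K_0\|e_j\|^\beta\le K_0(\gamma^s\|e_0\|_s)^\beta$, using (\ref{embed}) and Lemma \ref{L2.2}) times the very quantity being estimated. For $K_0\|e_0\|_\mu^\beta$ small enough the accumulated coefficient is $<1$, so these terms can be moved to the left-hand side, leaving the leading-term estimates with a fixed constant $C_*$; the base case $n=0$ follows directly since $\|Te_0\|=\|(A^*A)^{1/2+\theta}v_0\|\le\|v_0\|\lesssim\|e_0\|_\mu$.

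The hard part will be the self-referential bookkeeping of $R_n$ and the nonlinearity sum, since both are expressed through the unknowns $\|Te_j\|$ and $\|e_j\|^\beta$ that the lemma is meant to bound. The resolution is the bootstrap just described: the freezing is organized so that each replacement produces exactly one extra power of $K_0\|e_0\|_\mu^\beta$ via Lemma \ref{L10}, and the geometric decay of the residuals together with the finiteness $n_\d=O(1+|\log\d|)$ established in Remark \ref{R3.1} ensures that the sums accumulated through Lemma \ref{L2} remain bounded uniformly in $n<n_\d$ and $\d$. Making the interplay between the $\d$-dependent noise term, the discrepancy inequality $\|y^\d-F(x_n)\|>\tau\d$, and the decay rate $(1+s_n)^{-\sigma}$ quantitatively consistent is where the assumptions $\tau>2$ and $\tau\eta>2$ are ultimately spent.
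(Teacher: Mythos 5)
Your overall architecture coincides with the paper's: induction on the two bounds simultaneously, the frozen\mbox{-}operator expansion of $e_l$ and $Te_l$ with the mismatch terms matched to Lemma \ref{L10}, the sums controlled by Lemma \ref{L2}, the noise term absorbed via $\tau\d<\|y^\d-F(x_l)\|$ and $\tau>2$, and the self-referential terms absorbed by smallness of $K_0\|e_0\|_\mu^\beta$. However, the specific bounds you propose to insert into the sums are too crude to close the induction. For the linearization residual you plan to use only $\|\omega_j\|\lesssim K_0\|e_0\|_s^\beta\|Te_j\|$, which after the induction hypothesis yields $\sum_j t_j(s_l-s_j)^{-1}s_{j+1}^{-q}$ with $q=\frac{a+\mu}{2(a+s)}$; since $p=1$ here, Lemma \ref{L2} returns $s_l^{-q}\log(1+s_l)$, and the logarithm cannot be absorbed uniformly in $\d$ by shrinking $K_0\|e_0\|_\mu^\beta$. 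Likewise, replacing $K_0\|e_j\|^\beta$ by the constant $K_0(\gamma^s\|e_0\|_s)^\beta$ in the mismatch sum gives, via Lemma \ref{L2}, a decay $s_l^{-(b+\mu)/[2(a+s)]}$, which is strictly weaker than the target $s_l^{-(a+\mu)/[2(a+s)]}$ whenever $b<a$. The paper avoids both failures by deriving the \emph{decaying} refinements $\|y-F(x_j)+Te_j\|\lesssim K_0\|e_0\|_\mu^{1+\beta}s_{j+1}^{-\frac{\mu(1+\beta)+b}{2(a+s)}}$ (from (\ref{5.1.1}) with $t=\mu$) and $\|e_j\|\lesssim\|e_0\|_\mu s_{j+1}^{-\frac{\mu}{2(a+s)}}$ (interpolation between $\X_\mu$ and $\X_{-a}$), both of which exploit the inductive bound $\|e_j\|_\mu\lesssim\|e_0\|_\mu$; the resulting exponents exceed the target by exactly $\frac{\mu\beta-(a-b)}{2(a+s)}>0$, which is where $\mu>s\ge(a-b)/\beta$ is actually spent. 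Your plan invokes that constraint only rhetorically; the estimates you feed into Lemma \ref{L2} do not realize it.

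A second concrete omission concerns the last summand $j=l-1$. The refined bounds above involve $\|Te_{j+1}\|$ (through the comparison $\|Te_j\|\lesssim\|Te_{j+1}\|$ needed to convert $(1+s_j)^{-q}$ into $s_{j+1}^{-q}$), so at $j=l-1$ they reference the unknown $\|Te_l\|$ and must be kept in self-referential form; there the mismatch term carries the factor $t_{l-1}(s_l-s_{l-1})^{-\frac{a+b+2s}{2(a+s)}}=t_{l-1}^{\frac{a-b}{2(a+s)}}$, which is unbounded when $a>b$ since $t_{l-1}$ is not a priori bounded. The paper controls it by combining $\|e_{l-1}\|\lesssim\|e_0\|_s^{\frac{a}{a+s}}\|y^\d-F(x_{l-1})\|^{\frac{s}{a+s}}$ with the energy estimate (\ref{4.30.1}), $t_{l-1}\|y^\d-F(x_{l-1})\|^2\lesssim\|e_0\|_s^2$, and $s\beta\ge a-b$. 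Your proposal never invokes (\ref{4.30.1}); the bound $n_\d=O(1+|\log\d|)$ from Remark \ref{R3.1} that you cite instead plays no role in keeping these sums bounded. Without these two ingredients the induction does not close at the claimed rate.
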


\begin{proof} Since $s<\mu \le b+2s$, from (\ref{2.3}) we have
$\|e_n\|_\mu\sim \|(A^*A)^{\frac{s-\mu}{2(a+s)}} L^s e_n\|$. Therefore, it suffices to show
that there exists a constant $C_*>0$ such that
\begin{equation}\label{402}
\|(A^*A)^{\frac{s-\mu}{2(a+s)}} L^s e_n\|\le C_* \|e_0\|_\mu
\quad \mbox{and}\quad \|T e_n\| \le C_* \|e_0\|_\mu (1+s_n)^{-\frac{a+\mu}{2(a+s)}}
\end{equation}
for all $n=0, \cdots, n_\d-1$. We will show (\ref{402}) by induction. By using (\ref{2.3}) and Assumption \ref{A1} (b)
we have
$$
\|(A^*A)^{\frac{s-\mu}{2(a+s)}} L^s e_0\|\le \overline{c}(\frac{s-\mu}{a+s}) \|e_0\|_\mu
$$
and
$$
\|T e_0\|=\|(A^*A)^{1/2} L^s e_0\|\le \|(A^*A)^{\frac{s-\mu}{2(a+s)}} L^s e_0\|\le \overline{c}(\frac{s-\mu}{a+s}) \|e_0\|_\mu.
$$
Therefore (\ref{402}) with $n=0$ holds for $C_*\ge \overline{c}(\frac{s-\mu}{a+s})$.
Now we assume that (\ref{402}) is true for all $0\le n< l$ for some $0<l<n_\d$ and want to show that
it is  also true for $n=l$.

From the equation (\ref{g1.1}) and $x_{n+1}=x_n+u_n(t_n)$ it follows that
\begin{align*}
e_{n+1}&=e_n + L^{-s} g_{t_n}(A_n^*A_n) A_n^* \left(y^\d-F(x_n)\right) \\
&=L^{-s} r_{t_n}(A^*A) L^s e_n +L^{-s} g_{t_n}(A^*A) A^* (y^\d-F(x_n)+ T e_n) \nonumber\\
&\quad\, +L^{-s} \left[g_{t_n}(A_n^*A_n)A_n^* -g_{t_n}(A^*A)A^* \right] (y^\d-F(x_n)).
\end{align*}
By induction on this equation we obtain
\begin{align}\label{20}
e_l &=L^{-s} \prod_{j=0}^{l-1} r_{t_j}(A^* A) L^{s} e_0 +L^{-s} \sum_{j=0}^{l-1} \prod_{k=j+1}^{l-1} r_{t_k}(A^*A) g_{t_j}(A^*A) A^*(y^\d-y)\nonumber\\
&\quad\,  +L^{-s} \sum_{j=0}^{l-1} \prod_{k=j+1}^{l-1} r_{t_k}(A^*A) g_{t_j}(A^*A) A^*\left(y-F(x_j)+ T e_j\right) \nonumber\\
&\quad\, +L^{-s} \sum_{j=0}^{l-1} \prod_{k=j+1}^{l-1} r_{t_k}(A^*A)
\left[g_{t_j}(A_j^*A_j)A_j^*-g_{t_j}(A^*A)A^*\right] \left(y^\d-F(x_j)\right).
\end{align}
By multiplying  (\ref{20}) by $T:=F'(x^\dag)$, noting that $A=T L^{-s}$, and using the identity
\begin{equation*}
1-\la \sum_{j=0}^{l-1} g_{t_j}(\la) \prod_{k=j+1}^{l-1} r_{t_k}(\la)=\prod_{j=0}^{l-1} r_{t_j}(\la)
\end{equation*}
which follows from the relation $r_t(\la)=1-\la g_t(\la)$, we can obtain
\begin{align}\label{21}
T e_l& =A \prod_{j=0}^{l-1} r_{t_j}(A^*A) L^s e_0
 +\left[I-\prod_{j=0}^{l-1} r_{t_j}(AA^*)\right](y^\d-y) \nonumber\\
&\quad\, + \sum_{j=0}^{l-1} \prod_{k=j+1}^{l-1} r_{t_k}(AA^*) g_{t_j}(AA^*) AA^*\left(y-F(x_j)+ T e_j\right) \nonumber\\
&\quad\, + \sum_{j=0}^{l-1} A \prod_{k=j+1}^{l-1}
r_{t_k}(A^*A) \left[g_{t_j}(A_j^*A_j)A_j^*-g_{t_j}(A^*A)A^*\right] (y^\d-F(x_j)).
\end{align}
Since $e_0\in \X_\mu$ with $s<\mu\le b+2s$, by using (\ref{2.3}), (\ref{g1}), (\ref{g2}), (\ref{g3})
and Lemma \ref{L10} we can derive from (\ref{20}) that
\begin{align}\label{e29}
\|(&A^*A)^{\frac{s-\mu}{2(a+s)}} L^s e_l\| \nonumber\\
&\quad \le  c_3 \|e_0\|_\mu + s_l^{\frac{a+\mu}{2(a+s)}} \d
+ \sum_{j=0}^{l-1} t_j (s_l-s_j)^{-\frac{a+2s-\mu}{2(a+s)}} \|y-F(x_j)+T e_j\| \nonumber\\
&\quad  + C\sum_{j=0}^{l-1} t_j (s_l-s_j)^{-\frac{b+2s-\mu}{2(a+s)}} K_0\|e_j\|^\beta \|y^\d-F(x_j)\|,
\end{align}
where $c_3=\overline{c}(\frac{\mu-s}{a+s})$ and $C$ is a generic constant independent of $l$ and $\d$.

Next by using again $e_0\in \X_\mu$ with $s<\mu\le b+2s$, (\ref{2.3}) and (\ref{g1}), we can obtain
\begin{align*}
\left\|A \prod_{j=0}^{l-1} r_{t_j}(A^*A) L^s e_0\right\|
&\le \left\|A \prod_{j=0}^{l-1} r_{t_j}(A^*A) (A^*A)^{\frac{\mu-s}{2(a+s)}}\right\|
\left\|(A^*A)^{-\frac{\mu-s}{2(a+s)}} L^s e_0\right\|\\
&\le c_3 \sup_{0\le \la\le 1} \left(\la^{\frac{a+\mu}{2(a+s)}} \prod_{j=0}^{l-1} r_{t_j}(\la)\right) \|e_0\|_\mu\\
&\le c_3 s_l^{-\frac{a+\mu}{2(a+s)}} \|e_0\|_\mu.
\end{align*}
Therefore, it follows from (\ref{21}), (\ref{g2}) and Lemma \ref{L10} that
\begin{align}\label{333}
\|T e_l\| &\le c_3 s_l^{-\frac{a+\mu}{2(a+s)}} \|e_0\|_\mu +\d
+ \sum_{j=0}^{l-1} t_j (s_l-s_j)^{-1} \|y-F(x_j)+T e_j\| \nonumber\\
& \quad \, + C \sum_{j=0}^{l-1} t_j (s_l-s_j)^{-\frac{b+a+2s}{2(a+s)}} K_0\|e_j\|^\beta \|y^\d-F(x_j)\|.
\end{align}

We first use (\ref{333}) to derive the desired estimate for $\|Te_l\|$.
According to the relation $\|e_j\|_\mu \sim \|(A^*A)^{\frac{s-\mu}{2(a+s)}} L^s e_j\|$,
we have from the induction hypotheses that
\begin{equation}\label{induction}
\|e_j\|_\mu \lesssim \|e_0\|_\mu \quad \mbox{and}
\quad \|T e_j\|\lesssim \|e_0\|_\mu (1+s_j)^{-\frac{a+\mu}{2(a+s)}}, \quad 0\le j\le l-1.
\end{equation}
We need to estimate the terms
$$
\|e_j\|, \quad \|y^\d-F(x_j)\|\quad \mbox{and}\quad \|y-F(x_j)+T e_j\|, \qquad  0\le j\le l-1.
$$
For each term we will give two types of estimates, one is true for all $0\le j\le l-1$ and the other is true for
$0\le j<l-1$.

By using (\ref{5.1.2}) in Lemma \ref{L2.0}, Assumption \ref{A1} (a), Lemma \ref{L2.2},
and $\tau \d\le \|y^\d-F(x_j)\|$ for $0\le j<n_\d$ we have
\begin{align*}
\|y^\d-F(x_j)+T e_j\| &\le \d +\|y-F(x_j)+ T e_j\| \le \d + C K_0\|e_j\|_s^\beta \|e_j\|_{-a}\\
&\le \frac{1}{\tau} \|y^\d-F(x_j)\| +CK_0\|e_0\|_s^\beta \|T e_j\|.
\end{align*}
This shows for $0\le j<n_\d$ that
\begin{align}
\|y^\d-F(x_j)\| &\le \frac{\tau}{\tau-1} \left(1+ CK_0\|e_0\|_s^\beta\right) \|T e_j\|,\label{151}\\
\|y^\d-F(x_j)\| &\ge \frac{\tau}{1+\tau} \left(1- CK_0\|e_0\|_s^\beta\right) \|T e_j\|.\label{152}
\end{align}
The inequalities (\ref{151}), (\ref{152}) and Lemma \ref{L2.3} imply that if $K_0\|e_0\|_s^\beta$ is sufficiently small then
\begin{equation}\label{212}
\|T e_j\| \lesssim  \|T e_{j+1}\|, \qquad 0\le j<n_\d-1.
\end{equation}
Consequently, we have from (\ref{151}) and (\ref{212}) that
\begin{equation}\label{3.18.1}
\|y^\d-F(x_j)\|\lesssim \|T e_{j+1}\|, \qquad 0\le j<n_\d-1.
\end{equation}
This together with (\ref{induction}) gives
\begin{equation}\label{213}
\|y^\d-F(x_j)\|\lesssim \|e_0\|_\mu s_{j+1}^{-\frac{a+\mu}{2(a+s)}}, \qquad 0\le j<l-1.
\end{equation}

Next we estimate $\|y-F(x_j)+T e_j\|$. We have from (\ref{5.1.2}) in Lemma \ref{L2.0},
Assumption \ref{A1} (a), and (\ref{induction}) that
\begin{align*}
\|y-F(x_j)-T e_j\|\lesssim K_0\|e_j\|_\mu^\beta \|e_j\|_{-a} \lesssim K_0\|e_0\|_\mu^\beta \|T e_j\|.
\end{align*}
Therefore, it follows from (\ref{212}) that
\begin{equation}\label{214}
\|y-F(x_j)-T e_j\|\lesssim K_0\|e_0\|_\mu^\beta \|T e_{j+1}\|, \qquad 0\le j\le l-1.
\end{equation}
On the other hand, by using (\ref{5.1.1}) in Lemma \ref{L2.0} and Assumption \ref{A1} (a), we have
\begin{align*}
\|y-F(x_j)+T e_j\| &\le K_0\|e_j\|_\mu^{\frac{a(1+\beta)-b}{a+\mu}} \|e_j\|_{-a}^{\frac{\mu(1+\beta)+b}{a+\mu}}\\
&\lesssim K_0\|e_j\|_\mu^{\frac{a(1+\beta)-b}{a+\mu}} \|T e_j\|^{\frac{\mu(1+\beta)+b}{a+\mu}}
\end{align*}
Therefore, it follows from (\ref{212}) and (\ref{induction})  that
\begin{align}\label{215}
\|y-F(x_j)-T e_j\| \lesssim K_0 \|e_0\|_\mu^{1+\beta} s_{j+1}^{-\frac{\mu(1+\beta)+b}{2(a+s)}},
\qquad 0\le j<l-1.
\end{align}

For the term $\|e_j\|$, we first have from the interpolation inequality (\ref{inter}), Lemma \ref{L2.2}, and
Assumption \ref{A1} (a) that
$$
\|e_j\|\le \|e_j\|_s^{\frac{a}{a+s}} \|e_j\|_{-a}^{\frac{s}{a+s}}
\lesssim \|e_0\|_s^{\frac{a}{a+s}} \|T e_j\|^{\frac{s}{a+s}}.
$$
With the help of (\ref{152}) we then obtain
\begin{equation}\label{216}
\|e_j\|\lesssim \|e_0\|_s^{\frac{a}{a+s}}\|y^\d-F(x_j)\|^{\frac{s}{a+s}}, \qquad 0\le j\le l-1.
\end{equation}
On the other hand, by using the interpolation inequality (\ref{inter}) and Assumption \ref{A1} (a)
we also obtain for $0\le j\le l-1$ that
\begin{align*}
\|e_j\|\le \|e_j\|_\mu^{\frac{a}{a+\mu}} \|e_j\|_{-a}^{\frac{\mu}{a+\mu}}
\lesssim \|e_j\|_\mu^{\frac{a}{a+\mu}} \|T e_j\|^{\frac{\mu}{a+\mu}}.
\end{align*}
This together with (\ref{212}) and (\ref{induction}) gives
\begin{equation}\label{217}
\|e_j\|\lesssim \|e_0\|_\mu s_{j+1}^{-\frac{\mu}{2(a+s)}}, \qquad 0\le j<l-1.
\end{equation}

Now we use (\ref{3.18.1}), (\ref{214}) and (\ref{216}) with $j=l-1$ and
use (\ref{213}), (\ref{215}) and (\ref{217}) for $0\le j<l-1$, we then obtain from (\ref{333}) that
\begin{align*}
\|T e_l\| &\le c_3 \|e_0\|_\mu s_l^{-\frac{a+\mu}{2(a+s)}} +\d
 + C K_0\|e_0\|_\mu^{1+\beta} \sum_{j=0}^{l-2} t_j (s_l-s_j)^{-1} s_{j+1}^{-\frac{\mu(1+\beta)+b}{2(a+s)}} \\
 &\quad \, + CK_0\|e_0\|_\mu^\beta \|T e_l\|
+CK_0\|e_0\|_s^{\frac{a\beta}{a+s}} t_{l-1}^{\frac{a-b}{2(a+s)}} \|y^\d-F(x_{l-1})\|^{\frac{s\beta}{a+s}} \|T e_l\| \\
&\quad\, + C K_0\|e_0\|_\mu^{1+\beta} \sum_{j=0}^{l-2} t_j (s_l-s_j)^{-\frac{b+a+2s}{2(a+s)}} s_{j+1}^{-\frac{\mu(1+\beta)+a}{2(a+s)}}.
\end{align*}
Since $\mu>s\ge (a-b)/\beta$, we can use Lemma \ref{L2} to derive that
\begin{align*}
\|T e_l\| &\le \left(c_3+C K_0\|e_0\|_\mu^\beta\right) \|e_0\|_\mu  s_l^{-\frac{a+\mu}{2(a+s)}}
+\d + CK_0\|e_0\|_\mu^\beta \|T e_l\| \\
&\quad \, +CK_0\|e_0\|_s^{\frac{a\beta}{a+s}} t_{l-1}^{\frac{a-b}{2(a+s)}}
\|y^\d-F(x_{l-1})\|^{\frac{s\beta}{a+s}} \|T e_l\|.
\end{align*}
Recall that (\ref{4.30.1}) in Lemma \ref{L2.2} implies $t_{l-1}\|y^\d-F(x_{l-1})\|^2\lesssim \|e_0\|_s^2$.
Since $s\ge (a-b)/\beta$ and $t_{l-1}\ge c_0>0$, we have
$$
t_{l-1}^{\frac{a-b}{2(a+s)}} \|y^\d-F(x_{l-1})\|^{\frac{s\beta}{a+s}}
\le \left(t_{l-1} \|y^\d-F(x_{l-1})\|^2\right)^{\frac{s\beta}{2(a+s)}} t_{l-1}^{\frac{a-b-s\beta}{2(a+s)}}
\lesssim \|e_0\|_s^{\frac{s\beta}{a+s}}.
$$
Therefore, noting $\|e_0\|_s\lesssim \|e_0\|_\mu$, we obtain
\begin{align}\label{218}
\|T e_l\| &\le \left(c_3+C K_0\|e_0\|_\mu^\beta\right) \|e_0\|_\mu  s_l^{-\frac{a+\mu}{2(a+s)}}
+\d + CK_0\|e_0\|_\mu^\beta \|T e_l\|.
\end{align}
Since $l<n_\d$, we have from the definition of $n_\d$ and (\ref{151}) that
\begin{equation}\label{219}
\d\le \frac{1}{\tau} \|y^\d-F(x_l)\|\le \frac{1}{\tau-1} \left(1+CK_0\|e_0\|_\mu^\beta\right) \|T e_l\|.
\end{equation}
Combining this with (\ref{218}) gives
$$
\|Te_l\|\le  \left(c_3+C K_0\|e_0\|_\mu^\beta\right) \|e_0\|_\mu s_l^{-\frac{a+\mu}{2(a+s)}}
+\left(\frac{1}{\tau-1}+ CK_0\|e_0\|_\mu^\beta \right)\|T e_l\|.
$$
Recall that $\tau>2$. Therefore, if $K_0\|e_0\|_\mu^\beta$ is sufficiently small, then we have
$$
\|Te_l\|\le \frac{2c_3(\tau-1)}{\tau-2}\|e_0\|_\mu s_l^{-\frac{a+\mu}{2(a+s)}}.
$$
Since $l\ge 1$ and $s_l\ge t_{l-1}\ge c_0$, we have $1+s_l\le (1+1/c_0) s_l$. Therefore
$\|T e_l\|\le C_* \|e_0\|_\mu (1+s_l)^{-\frac{a+\mu}{2(a+s)}}$
if we choose $C_*\ge 2c_3(1+1/c_0)(\tau-1)/(\tau-2)$.

Finally we will use (\ref{e29}) to show  the desired estimate for $\|(A^*A)^{\frac{s-\mu}{2(a+s)}} L^s e_l\|$.
Since we have verified the estimates for $\|T e_l\|$, the estimates (\ref{213}), (\ref{215}) and (\ref{217})
therefore can be improved to include $j=l-1$; this is clear from the above argument. Consequently we have from
(\ref{e29}) that
\begin{align*}
\|(A^*&A)^{\frac{s-\mu}{2(a+s)}} L^s e_l\| \\
&\le c_3 \|e_0\|_\mu + s_l^{\frac{a+\mu}{2(a+s)}} \d
+C K_0\|e_0\|_\mu^{1+\beta} \sum_{j=0}^{l-1}  t_j(s_l-s_j)^{-\frac{a+2s-\mu}{2(a+s)}} s_{j+1}^{-\frac{\mu(1+\beta)+b}{2(a+s)}}\\
& +C K_0\|e_0\|_\mu^{1+\beta} \sum_{j=0}^{l-1}  t_j (s_l-s_j)^{-\frac{b+2s-\mu}{2(a+s)}} s_{j+1}^{-\frac{\mu(1+\beta)+a}{2(a+s)}}.
\end{align*}
It then follows from Lemma \ref{L2} that
\begin{align*}
\|(A^*A)^{\frac{s-\mu}{2(a+s)}} L^s e_l\| \le \left(c_2+ CK_0\|e_0\|_\mu^\beta\right) \|e_0\|_\mu + s_l^{\frac{a+\mu}{2(a+s)}}\d.
\end{align*}
With the help of (\ref{219}) and the estimate on $\|Te_l\|$, we obtain
$$
\|(A^*A)^{\frac{s-\mu}{2(a+s)}} L^s e_l\| \le \left(c_3+CK_0\|e_0\|_\mu^\beta\right) \|e_0\|_\mu
+\frac{C_*}{\tau-1} (1+CK_0\|e_0\|_\mu^\beta) \|e_0\|_\mu.
$$
Since $\tau>2$, we thus obtain $\|(A^*A)^{\frac{s-\mu}{2(a+s)}} L^s e_l\|\le C_* \|e_0\|_\mu$
for any $C_*\ge 4c_3(\tau-1)/(\tau-2)$ if $K_0\|e_0\|_\mu^\beta$ is sufficiently small.
The proof is therefore complete. \hfill $\Box$
\end{proof}

Now we are ready to complete the proof of Theorem \ref{T2}, the main result in this paper.

\vskip 0.3cm

\noindent {\it Proof of Theorem \ref{T2}.}
Considering Lemma \ref{L2.2} and Remark \ref{R3.1}, it remains only to derive the order optimal convergence rates.
When $n_\d=0$, the proof is standard. So we may assume $n_\d>0$. From Lemma \ref{P1} it follows that
$\|e_{n_\d-1}\|_\mu\lesssim \|e_0\|_\mu$. By using Lemma \ref{L2.3} and the definition of $n_\d$
we have $\|y^\d-F(x_{n_\d-1})\|\lesssim \d$, which together with (\ref{152}) implies
that $\|e_{n_\d-1}\|_{-a} \lesssim \|T e_{n_\d-1}\|\lesssim \d$. Therefore,
from the interpolation inequality (\ref{inter}) it follows that
$$
\|e_{n_\d-1}\|_s\le \|e_{n_\d-1}\|_\mu^{\frac{a+s}{a+\mu}} \|e_{n_\d-1}\|_{-a}^{\frac{\mu-s}{a+\mu}}
\lesssim \|e_0\|_\mu^{\frac{a+s}{a+\mu}} \d^{\frac{\mu-s}{a+\mu}}.
$$
In view of (\ref{2.11}) in Lemma \ref{L2.2}, we consequently obtain
$\|e_{n_\d}\|_s\lesssim \|e_0\|_\mu^{\frac{a+s}{a+\mu}} \d^{\frac{\mu-s}{a+\mu}}$.
By using the definition of $n_\d$ and (\ref{1.2}) we have $\|y-F(x_{n_\d})\|\le (1+\tau) \d$.
Observing that (\ref{5.1.2}) in Lemma \ref{L2.0} and (\ref{2.11}) in Lemma \ref{L2.2} imply
\begin{align*}
\|T e_{n_\d}\| & \le \|y-F(x_{n_\d})\| +\|y-F(x_{n_\d}) +T e_{n_\d}\|\\
&\le \|y-F(x_{n_\d})\| +C K_0\|e_{n_\d}\|_s^\beta \|T e_{n_\d}\|\\
&\le \|y-F(x_{n_\d})\| +C K_0 \|e_0\|_s^\beta \|T e_{n_\d}\|.
\end{align*}
Thus, if $K_0\|e_0\|_s\lesssim K_0\|e_0\|_\mu$ is sufficiently small, then $\|T e_{n_\d}\|\lesssim \|y-F(x_{n_\d})\|$.
Consequently $\|e_{n_\d}\|_{-a}\lesssim \|T e_{n_\d}\|\lesssim \d$.
Now we can use again the interpolation inequality (\ref{inter}) to derive for all $r\in [-a,s]$ that
$$
\|e_{n_\d}\|_r\le \|e_{n_\d}\|_s^{\frac{a+r}{a+s}} \|e_{n_\d}\|_{-a}^{\frac{s-r}{a+s}}
\lesssim \|e_0\|_\mu^{\frac{a+r}{a+\mu}} \d^{\frac{\mu-r}{a+\mu}}.
$$
The proof is therefore complete. \hfill $\Box$

\section{\bf Conclusions}
\setcounter{equation}{0}

Inexact Newton regularization methods have been suggested by Hanke and Rieder in \cite{H97} and \cite{R99},
respectively, for solving nonlinear ill-posed inverse problems.
The convergence rates of these methods have been considered in \cite{R99,R01},
the results however turned out to be inferior to the so-called order optimal rates. For a long time it has been
an open problem whether these inexact Newton methods are order optimal, although the numerical illustrations
in \cite{R99,R01} present strong indication.

Important progress has been made recently in \cite{H2010} where the regularizing
Levenberg-Marquardt scheme is shown to be order optimal affirmatively. In this paper we considered a
general class of inexact Newton methods in which the inner schemes are defined by Landweber iteration,
the implicit iteration, the asymptotic regularization and Tikhonov regularization.
By establishing the monotonicity of iteration errors and deriving a series of subtle estimates,
we succeeded in proving the order optimality of these methods. We also extended these order
optimality results to a more general situation where the inner schemes are defined by linear
regularization methods in Hilbert scales. Our theoretical findings confirm the numerical results in \cite{R99,R01}.\\

\noindent
{\bf Acknowledgement.} Part of the work was carried out during the stay in Department of Mathematics at Virginia Tech.


\end{document}